\newcommand{\by}[1]{\textit{{#1}}}
\newcommand{\jour}[1]{\textit{{#1}}}
\newcommand{\vol}[1]{\textbf{{#1}}}
\newcommand{\book}[1]{\textrm{{#1}}}
\newcommand{\sfE}{\mathsf{E}}
\newcommand{\sfV}{\mathsf{V}}
\newcommand{\edge}{{\bullet\!\!\!\frac{\quad}{}\!\!\!\bullet}}
\newcommand{\Id}{{\mathrm d}}
\newcommand*\circled[1]{\tikz[baseline=(char.base)]{
   \node[shape=circle,draw,inner sep=1pt] (char) {#1};}}
\newcommand{\zstroke}{%
  \text{\tiny\ooalign{\hidewidth\raisebox{0.2ex}{--}\hidewidth\cr$Z$\cr}}%
}
\newtheorem{theor}{Theorem}%[section]
\newtheorem*{claimNN}{Claim}
\theoremstyle{definition}
\newtheorem{proposition}[theor]{Proposition}%[section]
\newtheorem{lemma}[theor]{Lemma}%[section]
\newtheorem{cor}[theor]{Corollary}%[section]
\newtheorem{define}{Definition}%[section]
\newtheorem*{notation}{Notation}
\newtheorem{example}{Example}%[section]
\theoremstyle{remark}
\newtheorem{rem}{Remark}%[section]
\newcommand{\oi}{\mathbin{{\circ}_i}}
\DeclareMathOperator{\Gra}{Gra}
\def\oldvec{\mathaccent "017E\relax } %due to bug in amsmath, need old definition of \vec
\DeclareMathOperator{\Or}{\mathsf{O\oldvec{r}}}
\DeclareMathOperator{\Jac}{Jac}
\title[Defining properties of the Kontsevich unoriented graph complex]{The defining properties of the Kontsevich unoriented graph complex}
\author[N.\,J.\,Rutten]{Nina J. Rutten${}^{{1),2)}}$}
\thanks{${}^{{1)}}$\:\textit{Adrress}: 
Johann Bernoulli Institute for Mathematics and Computer Science, University of Groningen,
P.O.\,Box~407, 9700~AK Groningen, The Netherlands.%
}
\thanks{${}^{{2)}}$\:\textit{Present address}:
Mathematical Institute, Utrecht University,
P.O.\, Box~80010, 3508~TA  Utrecht, The Netherlands.
\quad \textit{E-mail}: \texttt{N.J.Rutten\symbol{"40}uu.nl}% no "student(s)" in address
}
\author[A.\,V.\,Kiselev]{Arthemy V. Kiselev${}^{{1),3)}}$}
\thanks{${}^{{3)}}$\:\textit{Present address}:
Bernoulli Institute for Mathematics, Computer Science and Artificial Intelligence, University of Groningen,
P.O.\,Box~407, 9700~AK Groningen, The Netherlands.
\quad \textit{E-mail}: \texttt{A.V.Kiselev\symbol{"40}rug.nl}%
}
\date{19 November 2018}
\subjclass[2010]{%
05E18, %Group actions on combinatorial structures
53D55, %Deformation quantization, star products
also
13D10, %Deformations and infinitesimal methods 
53D17, %Poisson manifolds; Poisson groupoids and algebroids
81S10.%%Geometry and quantization, symplectic methods
%%%
%(14D15  Formal methods; deformations)
%(58J10  Differential complexes; elliptic complexes (elliptic...?))
%53D50   	Geometric quantization
%(53D37  Mirror symmetry, symplectic aspects; homological mirror symmetry; Fukaya category )
%14J33   	Mirror symmetry
%58D19   	Group actions and symmetry properties
%58J70   	Invariance and symmetry properties
%(16E45  Differential graded algebras and applications (but in 'Associative rings and algebras'))
%17B81   	Applications to physics
}
\keywords{Unoriented graph complex, differential graded Lie algebra, cocycle, symmetry, leaf%Poisson bracket
}
\begin{document}
\begin{abstract}
Consider the real vector space of formal sums of non\/-\/empty, finite unoriented graphs without multiple edges and loops. Let the vertices of graphs be unlabelled but let every graph~$\gamma$ be endowed with an ordered set of edges~$\sfE(\gamma)$. Denote by~$\Gra$ the vector space of formal sums of graphs modulo the relation
$(\gamma_1,\mathsf{E}(\gamma_1))-\text{sign}(\sigma) (\gamma_2,\mathsf{E}(\gamma_2)) = 0$ for topologically equal graphs~$\gamma_1$ and~$\gamma_2$ whose edge orderings differ by a permutation~$\sigma$. 
The zero class in~$\Gra$ is represented by sums of graphs that cancel via the above relation. 
The Lie bracket of graphs with ordered edge sets is defined using the %operadic
insertion of a graph into vertices of the other one.
We give an explicit proof of the theorems which state that the space~$\Gra$ is a well\/-\/defined differential graded Lie algebra: both the Lie bracket $[{\cdot},{\cdot}]$ and the vertex\/-\/expanding differential~$\Id=[{\bullet}\!{-}\!{\bullet},{\cdot}]$ respect the calculus modulo zero graphs.
\end{abstract}
   %%%This text can be used for educational purposes (e.g., self-study).
\maketitle

\subsection*{Introduction}
Consider the real vector space spanned by non\/-\/empty, not necessarily connected finite unoriented graphs with unlabelled vertices and without multiple edges and loops. Endow every such graph~$\gamma$ with an ordering of edges $\sfE(\gamma)=I\wedge II\wedge\ldots\wedge\#\text{Edge}(\gamma)$, and introduce the relation, 
\begin{equation}\label{EqRelation}
\bigl(\gamma_1,\sfE(\gamma_1)\bigr)=(-)^\sigma\bigl(\gamma_2,\sfE(\gamma_2)\bigr), 
\end{equation}
between topologically equal graphs~$\gamma _1$,\ $\gamma_2$ whose edge orderings differ by a permutation $\sigma\colon\text{Edge}(\gamma_1)\mapsto\text{Edge}(\gamma_2)$. Denote by~$\Gra$ the quotient of the vector space of graphs at hand %under study 
modulo the above relation (cf.~\cite{Ascona96,GRTBV} or~\cite{JNMP}).

By definition, \emph{zero graphs} represent the zero class in~$\Gra$, that is, the (formal sums of) graphs which equal minus themselves under a symmetry that induces a parity\/-\/odd permutation of edges. For instance, the triangle with edges $I$,\ $II$,\ $III$ admits a flip $I\rightleftarrows II$, $III\rightleftarrows III$; the parity\/-\/odd cyclic permutation $I\mapsto II\mapsto III\mapsto IV\mapsto I$ of consecutive edges is a symmetry of the square. Hence these are zero graphs.

From~\cite{JPCS} and references therein we recall that the bi\/-\/linear operation of insertion~$\oi$ of a graph~$\gamma_1$ into a graph~$\gamma_2$ yields the sum of graphs~$\gamma_1\oi\gamma_2=\sum_{v\in\text{Vert}(\gamma_2)}(\gamma_1\longrightarrow v\text{ in }\gamma_2)$ such that the edges in a graph~$\gamma_2$ incident to a vertex~$v$ %in it
are redirected to vertices in~$\gamma_1$, each edge running over the vertices of~$\gamma_1$ by its own Leibniz rule. By definition, for each term~$g$ in $\gamma_1\oi\gamma_2$, the edge ordering is $\sfE(g)\mathrel{{:}{=}}\sfE(\gamma_1)\wedge\sfE(\gamma_2)$. Now put (extending by linearity)
\begin{equation}\label{EqBracket}
[\gamma_1,\gamma_2] \stackrel{\text{def}}{=} \gamma_1\oi\gamma_2 - 
(-)^{\#\text{Edge}(\gamma_1)\cdot\#\text{Edge}(\gamma_2)} \gamma_2\oi\gamma_1.
\end{equation}

\begin{example}\label{ExLeavesCancel}
Let us remember that $[{\bullet}\!{\stackrel{1}{-}}\!{\bullet},{\bullet}\!{\stackrel{2}{-}}\!{\bullet}] =
{\bullet}\!{\stackrel{1}{-}}\!{\bullet}\!{\stackrel{2}{-}}\!{\bullet} +
{\bullet}\!{\stackrel{2}{-}}\!{\bullet}\!{\stackrel{1}{-}}\!{\bullet} - (-)^{1\cdot 1}
\bigl({\bullet}\!{\stackrel{2}{-}}\!{\bullet}\!{\stackrel{1}{-}}\!{\bullet} +
{\bullet}\!{\stackrel{1}{-}}\!{\bullet}\!{\stackrel{2}{-}}\!{\bullet} \bigr) =
4\cdot {\bullet}\!{-}\!{\bullet}\!{-}\!{\bullet} = \boldsymbol{0}\in\Gra$
because the flip ${\bullet}\!{\stackrel{1}{-}}\!{\bullet}\!{\stackrel{2}{-}}\!{\bullet}
\mapsto {\bullet}\!{\stackrel{2}{-}}\!{\bullet}\!{\stackrel{1}{-}}\!{\bullet}$ is a symmetry which induces the parity\/-\/odd transposition of edges.
\end{example}

\begin{claimNN} The bi\/-\/linear bracket~$[{\cdot},{\cdot}]$ is well defined on~$\Gra$ because $[\text{zero graph},\text{any graph}] = (\text{sum of zero graphs}) + 0\cdot(\text{sum of graphs})$.\\
%%%
$\bullet$\quad The differential $\Id({\cdot})=[{\bullet}\!{-}\!{\bullet},\cdot]$ is well defined on~$\Gra$ because $\Id(\text{zero graph})=\boldsymbol{0}\in\Gra$ by the above and~$\Id^2(\cdot)=0$ in the space of graphs.\\%\boldsymbol{0}\in\Gra$.\\
%%%
$\bullet$\quad The graded Jacobi identity holds for the graded skew\/-\/symmetric bracket~$[{\cdot},{\cdot}]$ on~$\Gra$. Consequently, the Lie bracket of $\Id$-\/cocycles is a $\Id$-\/cocycle as well.
\end{claimNN}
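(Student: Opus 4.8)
The plan is to exhibit the bracket~\eqref{EqBracket} as the graded commutator of the insertion product~$\oi$, to prove that $\oi$ is a graded left pre\/-\/Lie product already on the vector space of graphs with ordered edge sets, and then to invoke the classical algebraic fact that the commutator of a pre\/-\/Lie product obeys the graded Jacobi identity. I grade each graph~$\gamma$ by $|\gamma|\mathrel{{:}{=}}\#\text{Edge}(\gamma)$; then $\oi$ has degree zero, $\#\text{Edge}(\gamma_1\oi\gamma_2)=|\gamma_1|+|\gamma_2|$, and the factor $(-)^{|\gamma_1|\cdot|\gamma_2|}$ in~\eqref{EqBracket} is the Koszul sign, so that $[{\cdot},{\cdot}]$ is graded skew\/-\/symmetric. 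Because the identity I am about to establish lives in the space of graphs, it will pass to~$\Gra$ once the bracket is known (by the first bullet of the Claim) to descend there; terms carrying a tadpole or a multiple edge are projected to~$\boldsymbol 0$ simultaneously on both sides and do not affect the argument.

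First I would expand the associator $\mathrm{As}(\gamma_1,\gamma_2,\gamma_3)\mathrel{{:}{=}}(\gamma_1\oi\gamma_2)\oi\gamma_3-\gamma_1\oi(\gamma_2\oi\gamma_3)$ by sorting the terms of each iterated insertion according to how $\gamma_1$ and $\gamma_2$ sit inside~$\gamma_3$. In $\gamma_1\oi(\gamma_2\oi\gamma_3)$ the inner insertion replaces a vertex $w\in\text{Vert}(\gamma_3)$ by~$\gamma_2$, after which $\gamma_1$ is inserted either into a vertex inherited from~$\gamma_2$ (the \emph{nested} terms, $\gamma_1\subset\gamma_2\subset\gamma_3$) or into one of the remaining vertices of~$\gamma_3$ distinct from~$w$ (the \emph{parallel} terms, in which $\gamma_1$ and $\gamma_2$ expand two different vertices of~$\gamma_3$). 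The product $(\gamma_1\oi\gamma_2)\oi\gamma_3$ produces only nested terms.

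The key step, and the one I expect to be the main obstacle, is to check that the two families of nested terms coincide termwise. Here the prescription that ``each edge runs over the vertices of~$\gamma_1$ by its own Leibniz rule'' is decisive: redistributing the edges of~$\gamma_3$ formerly at~$w$ over $\text{Vert}(\gamma_1)\sqcup(\text{Vert}(\gamma_2)\setminus\{v\})$ in a single step yields exactly the same formal sum as first spreading them over~$\text{Vert}(\gamma_2)$ and then, for those landing on the vertex~$v$ that is subsequently expanded into~$\gamma_1$, spreading them over~$\text{Vert}(\gamma_1)$, since each edge is handled independently and both procedures enumerate the identical set of target vertices. No sign intervenes, as re\/-\/attaching an edge does not move it within the ordering, and the two edge orderings agree because each equals $\sfE(\gamma_1)\wedge\sfE(\gamma_2)\wedge\sfE(\gamma_3)$ by associativity of~$\wedge$. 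Consequently
\[
\mathrm{As}(\gamma_1,\gamma_2,\gamma_3)=-\!\!\sum_{\substack{u,w\in\text{Vert}(\gamma_3)\\ u\neq w}}\!\!\bigl(\gamma_1\to u,\ \gamma_2\to w\text{ in }\gamma_3\bigr),
\]
a sum over ordered pairs of distinct vertices of~$\gamma_3$. Exchanging $\gamma_1$ and~$\gamma_2$ leaves this sum invariant up to replacing $\sfE(\gamma_1)\wedge\sfE(\gamma_2)$ by $\sfE(\gamma_2)\wedge\sfE(\gamma_1)$, which yields the graded left pre\/-\/Lie identity $\mathrm{As}(\gamma_1,\gamma_2,\gamma_3)=(-)^{|\gamma_1|\cdot|\gamma_2|}\,\mathrm{As}(\gamma_2,\gamma_1,\gamma_3)$.

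From the pre\/-\/Lie identity the graded Jacobi identity for $[{\cdot},{\cdot}]$ follows by the standard substitution argument: expanding the three cyclic double brackets in terms of~$\oi$, the nested contributions recombine while the parallel ones cancel in pairs through the symmetry of~$\mathrm{As}$; this step is purely formal and introduces no further graph combinatorics. Since the identity already holds in the space of graphs, it holds in~$\Gra$. The stated consequence is then immediate: specialising the graded Jacobi identity to the triple $(\edge,\alpha,\beta)$ with $|\edge|=1$ shows that $\Id=[\edge,{\cdot}]$ is a graded derivation of the bracket, $\Id[\alpha,\beta]=[\Id\alpha,\beta]+(-)^{|\alpha|}[\alpha,\Id\beta]$, whence $\Id\alpha=\Id\beta=\boldsymbol 0$ forces $\Id[\alpha,\beta]=\boldsymbol 0$, i.e.\ the bracket of two $\Id$-\/cocycles is again a $\Id$-\/cocycle.
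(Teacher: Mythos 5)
Your pre-Lie argument for the third bullet is sound and is essentially an efficient repackaging of the paper's proof of Theorem~\ref{ThJacobi}: the termwise identification of the two families of nested insertions (each edge running over target vertices by its own Leibniz rule, so that ``distribute over $\text{Vert}(\gamma_2)$ and then over $\text{Vert}(\gamma_1)$'' enumerates the same graphs as ``distribute over $\text{Vert}(\gamma_1\oi\gamma_2)$ in one step'') is exactly the cancellation the paper carries out by hand in Part~1 of that proof, and the graded symmetry of the associator in its first two arguments replaces the paper's Part~2 (the $S_3$-skew-symmetry of $\Jac$). That portion of your proposal is correct and arguably cleaner.

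The genuine gap is that you prove only the third bullet. The first bullet --- that $[\gamma^\zstroke,\gamma]$ is a sum of zero graphs whenever $\gamma^\zstroke$ is a zero graph, so that the bracket descends from the vector space of edge-ordered graphs to the quotient~$\Gra$ --- is not addressed at all; you explicitly write that your identity ``will pass to $\Gra$ once the bracket is known (by the first bullet of the Claim) to descend there,'' i.e.\ you assume the very statement that is the main content of the Claim and of the paper (Theorem~\ref{TheoBracketZero}). This is not a routine verification: one must show that a symmetry~$\sigma$ of~$\gamma^\zstroke$ with parity-odd~$\sigma_E$ forces both $\sum_{v}B_{\gamma^2}(v)$ (organized over $\sigma_V$-orbits of vertices of~$\gamma^\zstroke$) and $\sum_{\pi}\gamma^2_{v,\pi}$ (organized over $\sigma$-orbits of ordered partitions of~$\bar{N}(v)$) to collapse into cancelling pairs or individual zero graphs, with a separate parity analysis for orbits of even and of odd length. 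Nothing in the pre-Lie formalism produces this. The second bullet is likewise untouched: you never show $\Id^2=0$ (the paper proves this identically in the vector space of graphs, Proposition~\ref{Propddzero}, by pairing distant and nested blow-ups; the weaker route $\Id^2=\tfrac12[[\edge,\edge],\cdot]$ again needs the unproven first bullet, since $[\edge,\edge]$ is only a \emph{zero graph}, not the zero vector), nor $\Id(\text{zero graph})=\boldsymbol{0}$. As written, your proposal establishes the graded Lie algebra axioms on the ambient vector space but not that the structure is well defined on~$\Gra$, which is the point of the Claim.
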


Summarizing, the space~$\Gra$ of graphs with unlabelled vertices and ordered sets of edges is a differential graded Lie algebra~(dgLa). By inspecting the formation of the respective right\/-\/hand sides in the brackets like $\text{zero graph},\cdot]$, etc., we provide an explicit proof of the defining properties for this dgLa structure.

The knowledge of cocycles in the Kontsevich unoriented graph complex~$(\Gra,\Id)$ is important because under the orientation morphism~$\Or$,
the $\Id$-\/cocycles on $n$~vertices and $2n-2$ edges are mapped to the universal deformations of Poisson brackets on finite\/-\/dimensional affine manifolds (see~\cite{Ascona96} or~\cite{JNMP,SQS17,JPCS} and~\cite{Or2018}). For example, the tetrahedron~$\boldsymbol{\gamma}_3\in\ker\Id$ from~\cite{Ascona96},
as well as the pentagon\/-{} and heptagon\/-\/wheel cocycles~$\boldsymbol{\gamma}_5$,\ $\boldsymbol{\gamma}_7\in\ker\Id$ are presented in~\cite{JNMP} (see also references therein). The Poisson structure symmetry which corresponds to~$\boldsymbol{\gamma}_5$ is found in~\cite{SQS17,JPCS}. Likewise, the symmetry built from~$\boldsymbol{\gamma}_7$ is reported in~\cite{Or2018}.

This paper is structured as follows. After recalling some definitions and notation, we illustrate their work by proving the identity $\Id^2=0$ for the vertex blow\/-\/up $\Id=[\edge,{\cdot}]$. We show that the right\/-\/hand side of the formula $\Id^2(\gamma)=0$ vanishes identically for any graphs~$\gamma$, i.e.\ not just being equal to a sum of zero graphs in~$\Gra$. (A proof of that weaker statement, $\Id^2(\gamma)=\boldsymbol{0}\in\Gra$,
by using the Jacobi identity for the bracket~$[{\cdot},{\cdot}]$ is a corollary to Theorems~\ref{TheoBracketZero} and~\ref{ThJacobi}.) 
The main fact by which the space~$\Gra$ is well defined is Theorem~\ref{TheoBracketZero} on p.~\pageref{TheoBracketZero} stating that the bracket $[\gamma^\zstroke,{\cdot}]$ with a zero graph~$\gamma^\zstroke$ vanishes in~$\Gra$. Its proof relies on the technique of orbits for symmetry group action on vertices and edges of zero graphs.
Finally, in Theorem~\ref{ThJacobi} we present the graded Jacobi identity~\eqref{EqGradJac} for~$[{\cdot},{\cdot}]$; the right\/-\/hand side of~\eqref{EqGradJac} vanishes identically through cancellations in the vector space of graphs. The Jacobi identity implies in a standard way that the Lie bracket of two $\Id$-\/cocycles is a $\Id$-\/cocycle as well.

%\newpage
%\subsection*{Notation, conventions, and definitions}
%\noindent%
We follow the notation and conventions which are adopted in~\cite{JPCS} (and in~\cite{JNMP} where examples are given with practical calculations in the Kontsevich graph complex).
   %introduce some notation.\
%so that we can give a more detailed version of Definition 2 from \cite{JNMP}. 
%This is convenient for the proofs that will follow. 
%\marginpar{Which proofs exactly?}
%
We refer to~\cite{Or2018} or~\cite{GRTBV} for more details about the complex~$\Gra$ (introduced in~\cite{Ascona96}).
   %The definition of unoriented graph complex Gra is recalled in \cite{JNMP}.

\begin{notation}
A sum of graphs in the space~$\Gra$ of formal sums of graphs consisting of one term is called a \emph{single graph}. By convention, a single graph~$\gamma^x$ has $n_x$ vertices and $k_x$ edges. Denote by $\sfV(\gamma^x)$ the set of vertices, by $\sfE(\gamma^x)$ the set of edges, and by $X_{\gamma^x} \mathrel{{:}{=}} %\coloneqq 
\sfV(\gamma^x)\sqcup \sfE(\gamma^x)$ the set of vertices and edges of a single graph~$\gamma^x$. Unless stated otherwise, the wedge ordered set of edges is given by %a choice 
$\sfE(\gamma^x) = I^{(x)}\wedge II^{(x)}\wedge \cdots \wedge K^{(x)}$; any arbitrarily chosen labelling of the vertices is denoted by $1^{(x)}$, $2^{(x)}$, $\dots$, $n_x^{(x)}$.
Let $v$~be a vertex of a graph~$\gamma^x$. We denote by~$N(v)$ the set of neighbouring vertices of~$v$ and by~$\bar{N}(v)$ the set of edges attached to~$v$. For a given number $n\in\mathbb{N}\cup\{0\}$ we denote by $\Pi_n(v)$ the set of all possible ordered partitions of the set of incident edges~$\bar{N}(v)$ into $n$~disjoint sets (possibly, empty). Any element of~$\Pi_n(v)$ is of the form $\pi = (S_1$, $S_2$, $\cdots$, $S_n)$ such that $S_1\sqcup S_2\sqcup\cdots\sqcup S_n = \bar{N}(v)$.
\end{notation}

Now let us rephrase the definition of a symmetry of a graph in terms of the sets of edges attached to a fixed vertex. (This construction %Note that it 
is equivalent to the definition of a graph automorphism.)

\begin{define}
Let $\gamma$~be a single graph in~$\Gra$. A \emph{symmetry} of the graph~$\gamma$ is a permutation $\sigma\colon X_{\gamma}\rightarrow X_{\gamma}$ such that $\sigma(\sfV(\gamma))=\sfV(\gamma)$ and $\sigma(\sfE(\gamma))=\sfE(\gamma)$ as sets, and such that $\sigma(\bar{N}(v)) = \bar{N}(\sigma(v))$ for all vertices~$v$ of~$\gamma$.
\end{define}

\begin{notation}
For a given symmetry~$\sigma$ of a single graph~$\gamma$, we denote by $\sigma_V \colon \sfV(\gamma)\rightarrow\sfV(\gamma)$ the permutation of vertices induced by~$\sigma$. Likewise, we denote by $\sigma_E \colon \sfE(\gamma)\rightarrow\sfE(\gamma)$ the permutation of edges induced by~$\sigma$.
\end{notation}

We recall from \cite{JNMP} that, by definition, the insertion~$\oi$ of a sum of graphs into another sum of graphs is linear with respect to both arguments. Therefore it suffices to define the insertion of a single graph in~$\Gra$ into another single graph in~$\Gra$.
%From linearity it follows how insertion is defined for sums of graphs in Gra.

\begin{define}
Consider two single graphs~$\gamma^1$ and~$\gamma^2$ in~$\Gra$. The $\gamma^1$-\emph{blow\/-\/up} $B_{\gamma^1}(v)$ of a vertex~$v$ in~$\gamma^2$ is the following sum of graphs:
\[
B_{\gamma^1}(v) \mathrel{{:}{=}} %\coloneqq 
\sum_{\pi\in\Pi_{n_1}(v)} \gamma^2_{v,\pi},
\]
where $\gamma^2_{v,\pi}$ denotes the graph obtained from the graph~$\gamma^2$ by replacing the vertex~$v$ in it by the entire graph~$\gamma^1$ and where the incident edges~$\bar{N}(v)$ are reattached to the vertices in the inserted graph~$\gamma^1$ (via the correspondence given by a vertex labelling from~$\gamma^1$ and an %the 
ordered partition $\pi = (S_1$, $S_2$, $\dots$, $S_{n_1})$ of the set of edges~$\bar{N}(v)$ in~$\gamma^2$). %\marginpar{Picture: BlowupVertex} 
That is, for each vertex~$w$ in~$\gamma^1$, the edges in the set~$S_w$ in the partition~$\pi$  are reattached to the vertex~$w$ in the new graph~$\gamma^2_{v,\pi}$.
By definition, the \emph{insertion} $\gamma^1\circ_i\gamma^2$ of a graph~$\gamma^1$ into a graph~$\gamma^2$ is the sum
\[
\gamma^1\circ_i\gamma^2 \mathrel{{:}{=}} %\coloneqq 
\sum_{v\in\gamma^2} B_{\gamma^1}(v) = \sum_{v\in\gamma^2} \sum_{\pi\in\Pi_{n_1}(v)} \gamma^2_{v,\pi}.
\]
By convention, in every term of the sum $\gamma^1\circ_i\gamma^2$, the edge ordering is $\sfE(\gamma^1)\wedge\sfE(\gamma^2)$.
\end{define}

\section{The differential on Gra} %: its explicit work}

\begin{define}
Let $\gamma$~be a graph. The edges with a valency one vertex attached to them are called \emph{leaves} of the graph~$\gamma$. Next, let $\gamma$~be a sum of graphs in the space~$\Gra$. The graphs in the sum~$\Id(\gamma)$ where the new edge~$\edge$ appears as a leaf are \emph{leaved graphs}.
\end{define}

By default, we shall consider not necessarily connected but always leafless, i.e.\ the ones without leaves (cf.~\cite{Ascona96,GRTBV}.

\begin{proposition}%[Lemma 2 from \cite{JNMP}]
\label{Propddzero}
For any (sum of) single graphs $\gamma\in\Gra$ without leaves, the linear operator~$\Id$ which acts by the formula
\begin{equation}\label{d}
\Id(\gamma) \mathrel{{:}{=}} %\coloneqq 
[\edge,\gamma] = \edge\circ_i\gamma - (-)^{\#\sfE(\edge)\#\sfE(\gamma)} \gamma \circ_i \edge
\end{equation}
is a differential: $\Id \circ \Id =0$,
where the right\/-\/hand side is the zero element of the vector space of graphs.
\end{proposition}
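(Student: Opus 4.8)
The plan is to avoid the Jacobi-identity route (which would only give $\Id^2(\gamma)=\boldsymbol{0}\in\Gra$ up to zero graphs) and instead exhibit a fixed\/-\/point\/-\/free, sign\/-\/reversing involution on the terms of $\Id^2(\gamma)$, so that the sum cancels literally and term\/-\/by\/-\/term in the vector space of graphs.

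First I would reduce $\Id$ to a single, sign\/-\/clean vertex\/-\/expansion operator. Unfolding~\eqref{d}, the summand $\gamma\circ_i\edge$ consists entirely of leaved graphs (inserting $\gamma$ at a univalent vertex of~$\edge$ redirects the lone edge of~$\edge$ onto one vertex of~$\gamma$ and leaves a pendant), and these match, with the opposite coefficient supplied by the Koszul factor $-(-)^{\#\sfE(\gamma)}$, exactly the leaved terms of $\edge\circ_i\gamma$ in which all edges incident to a blown\/-\/up vertex are sent to one of the two new vertices. After this cancellation --- which is an identity in the vector space of graphs, not merely modulo~\eqref{EqRelation} --- the operator $\Id$ coincides with the vertex\/-\/expansion operator $E(\gamma)\mathrel{{:}{=}}\sum_{v}\sum_{\pi}\gamma_{v,\pi}$, where $v$ runs over $\sfV(\gamma)$ and $\pi$ over the ordered two\/-\/block partitions of $\bar N(v)$ with both blocks non\/-\/empty, the two new vertices being joined by a new edge that heads the ordering $\varepsilon\wedge\sfE(\gamma)$. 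Since $E$ preserves leaflessness, $\Id^2=E^2$ as elements of the vector space of graphs, and it suffices to prove $E^2(\gamma)=0$.

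Next I would describe the terms of $E^2(\gamma)$. Each arises by expanding a vertex (the inner $E$, contributing a new edge $\varepsilon_1$) and then expanding a vertex of the result (the outer $E$, contributing $\varepsilon_2$); the two new edges head the edge ordering as $\varepsilon_2\wedge\varepsilon_1\wedge\sfE(\gamma)$. I would then split into the dichotomy governed by whether $\varepsilon_1$ and $\varepsilon_2$ are vertex\/-\/disjoint. If they are, the two expansions act at distinct original vertices, commute, and performing them in the opposite order yields the identical graph with $\varepsilon_1,\varepsilon_2$ transposed in the ordering. If $\varepsilon_1$ and $\varepsilon_2$ share a vertex, they necessarily form a two\/-\/edge path $a-b-c$ obtained by subdividing a single original vertex~$v$ (the subdivisions of one vertex always form a tree, hence here a path, never a triangle); such a configuration is produced by exactly two histories --- split $v$ as $a\mid bc$ and then split the second block, or split $v$ as $ab\mid c$ and then split the first block --- and these two histories again give the same graph with the two new edges transposed. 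The involution $\iota$ swapping the two histories (equivalently, the order of the two expansions) is therefore a transposition of the leading pair of edges, so $\iota(T)=-T$ via~\eqref{EqRelation}, and it is fixed\/-\/point\/-\/free because the two new edges are always distinct. Hence every term cancels against its partner and $E^2(\gamma)=0$.

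I expect the main obstacle to be the nested case together with its sign. The delicate points are: checking that the two histories of a path $a-b-c$ really produce the very same labelled graph (in particular that the stray edge $\varepsilon_1$ of the first split lands on the middle vertex~$b$ in both), that the correspondence is a genuine involution with no third history and no fixed points, and that it indeed amounts to a single transposition of $\varepsilon_1$ and $\varepsilon_2$ with no residual Koszul sign --- which is precisely why I first pass to the sign\/-\/clean operator $E$ rather than tracking the factors $(-)^{\#\sfE(\cdot)}$ through $[\edge,[\edge,\gamma]]$ directly. The fixed\/-\/point\/-\/freeness is what upgrades the conclusion from $\boldsymbol{0}\in\Gra$ to the identically\/-\/vanishing statement, so it must be verified rather than assumed. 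The underlying topological bijection, once the edge labels are pinned down, is then routine.
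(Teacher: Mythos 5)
Your proposal is correct and follows essentially the same route as the paper: the preliminary cancellation of leaved graphs and reduction to the non\/-\/empty two\/-\/block expansion operator is exactly Lemma~\ref{LeavesCancel} and Corollary~\ref{d2}, and your vertex\/-\/disjoint versus shared\/-\/vertex dichotomy with the order\/-\/swapping, sign\/-\/reversing pairing is precisely the paper's cancellation of \emph{distant} and \emph{nested} blown\/-\/up graphs (including the two\/-\/history analysis $(A\sqcup B,C)$ then split the first block versus $(A,B\sqcup C)$ then split the second). The delicate points you flag --- the middle vertex receiving the stray edge in both histories, and fixed\/-\/point\/-\/freeness --- are exactly the ones the paper verifies explicitly.
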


%Since the operator~$\Id=[\edge,{\cdot}]$ is the sum~\eqref{EqBracket} of two linear operators it is linear itself, %. Therefore 
%hence it suffices to prove the statement for single graphs instead of sums of graphs.

\begin{lemma}\label{LeavesCancel}
Let $\gamma$~be a sum of leafless graphs in the space~$\Gra$. All leaved graphs in~$\Id(\gamma)$ cancel.
\end{lemma}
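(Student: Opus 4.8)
The plan is to locate, on each side of the bracket~\eqref{d}, exactly which terms are leaved, to show that the two sides produce the very same collection of leaved graphs with the same multiplicities, and finally to check that the sign in~\eqref{d} together with the two \emph{opposite} edge\/-\/ordering conventions makes these two contributions equal, so that they cancel in~$\Id(\gamma)$. Throughout I write $k_\gamma=\#\sfE(\gamma)$ and, for a vertex~$v$ of~$\gamma$, I denote by~$L_v$ the graph obtained from~$\gamma$ by attaching one new pendant edge (a leaf) at~$v$, endowed with the edge ordering $\sfE(\edge)\wedge\sfE(\gamma)$ in which the new edge comes first.

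First I would analyse $\edge\circ_i\gamma$. Here each vertex~$v$ of~$\gamma$ is blown up by the edge graph, so $\bar{N}(v)$ is split according to an ordered $2$-partition $\pi=(S_1,S_2)\in\Pi_2(v)$ and the new edge joins the two resulting vertices $w_1,w_2$. This new edge is a leaf exactly when one of $w_1,w_2$ has valency one, i.e.\ when $S_1=\varnothing$ or $S_2=\varnothing$; these are the only two leaved partitions, and both yield the same single graph~$L_v$ (the roles of $w_1,w_2$ are interchanged, but the vertices are unlabelled and the ordering $\sfE(\edge)\wedge\sfE(\gamma)$ is unchanged). Hence the leaved part of $\edge\circ_i\gamma$ equals $\sum_{v}2\,L_v$. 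Next I would analyse $\gamma\circ_i\edge$, where the two vertices of~$\edge$ are each blown up by~$\gamma$. Since the only edge of~$\edge$ is incident to the vertex being blown up, it is always reattached to a single vertex~$w$ of the inserted~$\gamma$, while its other endpoint is the remaining, untouched vertex of~$\edge$, which has valency one; thus \emph{every} term of $\gamma\circ_i\edge$ is leaved. Attaching this edge to~$w$ produces~$L_w$, and there are exactly two such terms for each~$w$ (one per vertex of~$\edge$). Therefore the leaved part---in fact all---of $\gamma\circ_i\edge$ equals $\sum_{w}2\,L_w$, now carrying the ordering $\sfE(\gamma)\wedge\sfE(\edge)$.

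Finally I would reconcile the two conventions and collect signs. Moving the single new edge to the front past the $k_\gamma$ edges of~$\gamma$ gives $\sfE(\gamma)\wedge\sfE(\edge)=(-)^{k_\gamma}\,\sfE(\edge)\wedge\sfE(\gamma)$, so in the standard ordering the leaved part of $\gamma\circ_i\edge$ is $(-)^{k_\gamma}\sum_{v}2\,L_v$. Since $\#\sfE(\edge)=1$, the prefactor in~\eqref{d} is $(-)^{k_\gamma}$, whence the leaved part of $\Id(\gamma)$ is
\[
\sum_{v}2\,L_v \;-\;(-)^{k_\gamma}(-)^{k_\gamma}\sum_{v}2\,L_v \;=\;\sum_{v}2\,L_v-\sum_{v}2\,L_v\;=\;0 .
\]

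I expect the only genuinely delicate point to be this sign bookkeeping: one must notice that the bracket sign $(-)^{k_\gamma}$ is exactly cancelled by the reordering sign $(-)^{k_\gamma}$, so that the two leaved contributions \emph{coincide} (rather than reinforce) before the minus sign of~\eqref{d} annihilates them. Secondary checks are that the combinatorial factor~$2$ arises identically on both sides and that both insertions range over the same pendant graphs $\{L_v\}_{v\in\sfV(\gamma)}$; the hypothesis that~$\gamma$ is leafless is what guarantees each $v$ has valency at least two, so the two nonempty\/-\/part partitions in $\edge\circ_i\gamma$ are never leaved and no extra leaved terms are overlooked.
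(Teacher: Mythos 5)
Your proposal is correct and follows essentially the same route as the paper's proof: identify the leaved terms on each side of~\eqref{d} (the two empty\/-\/part partitions in $\edge\circ_i\gamma$ versus all of $\gamma\circ_i\edge$), match them pairwise with multiplicity two, and observe that the reordering sign $(-)^{k_\gamma}$ from $\sfE(\gamma)\wedge\sfE(\edge)=(-)^{k_\gamma}\sfE(\edge)\wedge\sfE(\gamma)$ cancels the bracket prefactor $(-)^{k_\gamma}$, so the minus sign in~\eqref{d} annihilates the two equal contributions. The paper's argument is the same computation written with the explicit labels $E^{(1)}$, $1^{(1)}$, $2^{(1)}$ and the partitions $(\bar{N}(v),\varnothing)$, $(\varnothing,\bar{N}(v))$.
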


\begin{proof}
Again, since $\Id$~is linear it suffices to assert the statement for single graphs. 
We consider the edge~$\edge$ as the first graph. 
Denote by~$E^{(1)}$ the edge in the graph~$\edge$, then the singleton set $\sfE(\edge)=E^{(1)}$ is wedge ordered.
We may introduce any labelling of vertices for the edge graph~$\edge$; 
let it be $1^{(1)}$, $2^{(1)}$.
Then the graph~$\edge$ is represented by $ _{1^{(1)}}\!\bullet\!\!\!\frac{\,\,E^{(1)}}{}\!\!\!\bullet\!_{2^{(1)}}$.

Now let $\gamma^2$~be a single graph in the space~$\Gra$.
We can represent each blow\/-\/up $B_{\gamma^2}(i^{(1)})$ of a vertex~$i$ in~$\edge$ by $_{i^{(1)}}\circled{$\gamma^2$}\!\frac{\quad}{}\!\!\!\bullet\!_{j^{(1)}}$, 
where $j=3-i$ (the vertex other than~$i$).
Now the subtrahend, $(-)^{\#\sfE(\edge)\#\sfE(\gamma^2)}\gamma^2\circ_i\edge$ %, $(-)^{\#\sfE(\edge)\#\sfE(\gamma)} \gamma \circ_i \edge$,
in equation~\eqref{d} --\,applied to the graphs under study\,-- %consider
is %can be 
expressed as follows:
\begin{align*}
(-)^{\#\sfE(\edge)\#\sfE(\gamma^2)} \gamma^2 \circ_i \edge &= 
(-)^{k_2} \bigl( B_{\gamma^2} (1^{(1)}) + B_{\gamma^2} (2^{(1)}) \bigr)\\
&= (-)^{k_2}\,\, _{i^{(1)}}\circled{$\gamma^2$}\!\frac{\quad}{}\!\!\!\bullet\!_{j^{(1)}} + (-)^{k_2}\,\, _{i^{(1)}}\!\!\bullet\!\!\!\frac{\quad}{}\!\circled{$\gamma^2$}_{j^{(1)}}.
\end{align*}
%from which 
It is clear that this %it is a 
sum consists only of leaved graphs.
The edge ordering of all graphs in this sum is given by $%(-)^{k_2} 
I^{(2)} \wedge II^{(2)} \wedge \cdots \wedge K^{(2)} \wedge E^{(1)}$. Note that graphs with exactly the same topology appear in the minuend $\edge\circ_i\gamma^2$ of equation~\eqref{d} as well. 
Namely, such are % precisely 
the graphs of the form $\gamma^2_{v,(\bar{N}(v),\varnothing %emptyset
)}$ and $\gamma^2_{v,(\varnothing %emptyset
,\bar{N}(v))}$, all with the edge ordering $E^{(1)} \wedge I^{(2)} \wedge II^{(2)} \wedge \cdots \wedge K^{(2)}$. 
We remark that in the minuend, there %do not 
appear no leaved graphs other than these, and that 
\begin{align*}
(-)^{k_2} I^{(2)} \wedge II^{(2)} \wedge \cdots \wedge K^{(2)} \wedge E^{(1)} &= (-)^{k_2} (-)^{k_2} E^{(1)} \wedge I^{(2)} \wedge II^{(2)} \wedge \cdots \wedge K^{(2)}\\
&= E^{(1)} \wedge I^{(2)} \wedge II^{(2)} \wedge \cdots \wedge K^{(2)}.
\end{align*}
For all the leaved graphs appearing in the differential~$\Id(\gamma^2)$, see equation~\eqref{d}, we obtain
\[
\sum_{v\in\gamma^2} (\gamma^2_{v,(\bar{N}(v),\varnothing %emptyset
)} + \gamma^2_{v,(\varnothing %emptyset
,\bar{N}(v))}) - (-)^{k_2} (-)^{k_2} \sum_{v\in\gamma^2} (\gamma^2_{v,(\bar{N}(v),\varnothing %emptyset
)} + \gamma^2_{v,(\varnothing %emptyset
,\bar{N}(v))}) = 0.
\]
This shows that %Hence 
all the leaved graphs cancel in~$\Id(\gamma^2)$.
\end{proof}

%An equivalent to the definition of differential~$\Id$ is given 
%in the lemma below. It is a direct consequence of Lemma~\ref{LeavesCancel}.

\begin{cor}\label{d2}
The differential~$\Id$ applied to a single graph~$\gamma\in\Gra$ without leaves is equal to
\[
\Id(\gamma) = \sum_{v\in\sfV(\gamma)} \sum_{\pi\in \tilde{\Pi}_{2}(v)} \gamma_{v,\pi},
\]
where $\tilde{\Pi}_{2}(v)$~is the set of ordered partitions of~$\bar{N}(v)$ into two \emph{non\/-\/empty} sets.
\end{cor}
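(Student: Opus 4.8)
The plan is to start from the defining formula~\eqref{d} and reduce it using the cancellation already established in Lemma~\ref{LeavesCancel}. Since $\Id$ is linear it suffices to treat a single leafless graph, which I call~$\gamma$. First I would expand the minuend $\edge\circ_i\gamma$ directly from the definition of insertion: because the edge graph~$\edge$ has exactly $n_1=2$ vertices, one has
\[
\edge\circ_i\gamma=\sum_{v\in\sfV(\gamma)}B_{\edge}(v)=\sum_{v\in\sfV(\gamma)}\sum_{\pi\in\Pi_2(v)}\gamma_{v,\pi},
\]
where $\pi=(S_1,S_2)$ ranges over all ordered partitions of the incident edge set~$\bar N(v)$ into two (possibly empty) blocks, every term carrying the coefficient~$+1$ and the edge ordering $E^{(1)}\wedge I^{(2)}\wedge\cdots\wedge K^{(2)}$.

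Next I would split the index set $\Pi_2(v)$ according to whether one block is empty. The two degenerate partitions $(\bar N(v),\varnothing)$ and $(\varnothing,\bar N(v))$ produce exactly those graphs in which one endpoint of the new edge~$\edge$ inherits none of the old incident edges and hence becomes a valency\/-\/one vertex; these are precisely the leaved graphs $\gamma_{v,(\bar N(v),\varnothing)}$ and $\gamma_{v,(\varnothing,\bar N(v))}$. Every remaining partition lies in~$\tilde\Pi_2(v)$, and since $\gamma$ is leafless while each new endpoint then retains at least one old edge in addition to~$\edge$, those terms contain no leaf at all. Thus $\Pi_2(v)=\tilde\Pi_2(v)\sqcup\{\text{two degenerate partitions}\}$, with the degenerate part accounting for all leaved contributions of the minuend.

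Finally I would invoke Lemma~\ref{LeavesCancel}: all leaved graphs in~$\Id(\gamma)$ cancel. In the proof of that lemma the subtrahend $(-)^{\#\sfE(\edge)\#\sfE(\gamma)}\gamma\circ_i\edge$ was shown to consist entirely of leaved graphs that coincide topologically, and with matching sign and edge ordering, with the two degenerate blow\/-\/up terms of the minuend. Subtracting it therefore annihilates exactly the empty\/-\/block contributions, leaving
\[
\Id(\gamma)=\sum_{v\in\sfV(\gamma)}\sum_{\pi\in\tilde\Pi_2(v)}\gamma_{v,\pi},
\]
which is the claimed formula.

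The only delicate point, and thus the main obstacle, is the bookkeeping of signs and edge orderings when matching the leaved part of the minuend against the subtrahend; but this matching is exactly what Lemma~\ref{LeavesCancel} already supplies (including the computation showing $(-)^{k_2}(-)^{k_2}=1$ once the wedge factor $E^{(1)}$ is moved to the front). Consequently no new estimate is needed, and the corollary follows purely combinatorially as the restriction of the partition index set from~$\Pi_2(v)$ to~$\tilde\Pi_2(v)$.
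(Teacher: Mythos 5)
Your proposal is correct and follows exactly the route the paper intends: the corollary is stated as an immediate consequence of Lemma~\ref{LeavesCancel}, namely that the subtrahend $(-)^{\#\sfE(\edge)\#\sfE(\gamma)}\gamma\circ_i\edge$ cancels precisely the two empty\/-\/block (leaved) terms of each blow\/-\/up $B_{\edge}(v)$ in the minuend, leaving only the partitions in~$\tilde{\Pi}_2(v)$. The sign and edge\/-\/ordering bookkeeping you defer to the lemma is indeed carried out there, so nothing is missing.
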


For the proof of Proposition~\ref{Propddzero} it is convenient to make a distinction between the two kinds of graphs that can appear in the sum $\Id\circ\Id(\gamma)$ %which is 
obtained by applying the composition of operators $\Id\circ\Id$ to a single graph~$\gamma$ in~$\Gra$.

\begin{define}
Let $\gamma^2$~be a single graph in~$\Gra$ and let $v$~be one of its vertices. Let $\gamma^2_{v,\pi}$ be a graph appearing in the edge\/-\/blow\/-\/up $B_{\edge}(v)$ of~$v$, that is, in a sum of graphs that contributes to~$\Id(\gamma)$. We introduce the new labellings $_{1^{(0)}}\!\bullet\!\!\!\frac{\,\,E^{(0)}}{}\!\!\!\bullet\!_{2^{(0)}}$ for the vertices and edges of the edge~$\edge$ that is inserted when we apply the differential~$\Id$ for the second time, so that we can distinguish it from the edge that was inserted firstly. (We choose the zero labellings because it induces a natural order of labellings in the edge ordering of any graph~$\gamma$ in $\Id\circ\Id(\gamma^2)$, namely $\sfE(\gamma) = E^{(0)}\wedge E^{(1)}\wedge I^{(2)}\wedge II^{(2)}\cdots\wedge K^{(2)}$.)

We say that graphs in the sum $\Id\circ\Id(\gamma^{(2)})$ where the second edge\/-\/blow\/-\/up is applied to an \emph{old} vertex~$w$ in $\gamma^2_{v,\pi}$ (i.e.\ neither of the vertices $1^{(1)}$ or $2^{(1)}$) are \emph{distant blown\/-\/up graphs}. 
These graphs are of the form $(\gamma^2_{v,\pi})_{w,\tau}$, where $\pi=(S_1,S_2)\in\tilde{\Pi}_{2}(v)$ and $\tau=(T_1,T_2)\in\tilde{\Pi}_{2}(w)$.

In turn, we say that graphs in the sum $\Id\circ\Id(\gamma^{(2)})$, where the second edge\/-\/blow\/-\/up is applied to one of the two \emph{new} vertices, $1^{(1)}$ or $2^{(1)}$ in $\gamma^2_{v,\pi}$, are \emph{nested blown\/-\/up graphs}. 
These graphs are of the form $(\gamma^2_{v,\pi})_{i^{(1)},\tau}$, for $i=1$ or $i=2$, where $\pi=(S_1,S_2)\in\tilde{\Pi}_{2}(v)$ and $\tau=(T_1,T_2)\in\tilde{\Pi}_{2}(i^{(1)})$.
\end{define}

%The definition of a zero graph is recalled in \cite{JNMP}.
Now we have the tools to prove Proposition~\ref{Propddzero}.

\begin{proof}[Proof of Proposition~\protect\ref{Propddzero}]
%As we remarked, 
It suffices to verify %prove 
the claim %proposition 
for single graphs in~$\Gra$ and for the realization of~$\Id$ as given in Corollary~\ref{d2}. 
In order to prove that $\Id\circ\Id(\gamma)$ vanishes for any single graph~$\gamma\in\Gra$, we show %prove 
first that all distant blown\/-\/up graphs cancel in~$\Id\circ\Id(\gamma)$, and second, that all nested blown\/-\/up graphs cancel in~$\Id\circ\Id(\gamma)$.
Let $\gamma^2$~be a single graph in the space~$\Gra$ and consider the sum $\Id\circ\Id(\gamma^2)$.
Let $\gamma^a \mathrel{{:}{=}} %\coloneqq 
(\gamma^2_{v,\pi})_{w,\tau}$ be a distant blown\/-\/up graph in this sum. Here a vertex~$v$ is blown up first and a vertex~$w$ is blown up second, so $\gamma^a$ is obtained in the sum~$\Id(B_\edge(v))$. Then locally, near the newly inserted edges, the graph~$\gamma^a$ looks as follows %\marginpar{Edit}\marginpar{Picture: ddZero}: 
\[
^{A}_{1^{(1)}}\!\!\bullet\!\!\!\frac{\,\,\,\,E^{(1)\,}}{}\!\!\!\bullet^{\,\,B}_{2^{(1)}} \qquad ^{C}_{1^{(0)}}\!\!\bullet\!\!\!\frac{\,\,\,\,E^{(0)\,}}{}\!\!\!\bullet^{\,\,D}_{2^{(0)}},
\]
where the sets $A$, $B$, $C$, $D$ next to the vertices are the sets of edges attached to the respective vertices: by the definition of a blow\/-\/up we have that $A\cup B = \bar{N}(v)$ and $C\cup D = \bar{N}(w)$. (Note that the sets~$\bar{N}(v)$ and~$\bar{N}(w)$ intersect if and only if the vertices~$v$ and~$w$ are neighbours in the graph~$\gamma^2$.)

We pair the graph $\gamma^a = (\gamma^2_{v,\pi})_{w,\tau}$ with the graph $(\gamma^2_{w,\tau})_{v,\pi} \mathrel{{=}{:}} %\eqqcolon 
\gamma^b$ in~$\Id(B_\edge(w))$. Locally, near the inserted edges, the graph~$\gamma^b$ looks like this:
\[
^{A}_{1^{(0)}}\!\!\bullet\!\!\!\frac{\,\,\,\,E^{(0)\,}}{}\!\!\!\bullet^{\,\,B}_{2^{(0)}} \qquad ^{C}_{1^{(1)}}\!\!\bullet\!\!\!\frac{\,\,\,\,E^{(1)\,}}{}\!\!\!\bullet^{\,\,D}_{2^{(1)}}.
\]
Note that a distant blown\/-\/up graph is matched with %paired to 
precisely one other distant blown\/-\/up graph via this pairing,\footnote{\label{Setsoffour} The graphs~$\gamma^a$ and~$\gamma^b$ appear in sets of four in~$\Id\circ\Id(\gamma^2)$. Namely, the edges can both be inserted in two ways -- still producing exactly the same graph~-- by exchanging the roles of vertex~$1^{(i)}$ and~$2^{(i)}$ for~$i=0,1$, that is, by swapping the set~$A$ with the set~$B$ (for~$i=1$) and swapping the set~$C$ with the set~$D$ (for~$i=0$).}
%%%
and the set of all blow\/-\/ups is a disjoint union of such pairs.

Since the labelling of vertices can be chosen arbitrarily, the graphs~$\gamma^a$ and~$\gamma^b$ are topologically equal. Specifically, they are equal under the %following 
matching of edges: $E^{(0)(a)} = E^{(1)(b)}$, $E^{(1)(a)} = E^{(0)(b)}$, and $\#^{(2)(a)} = \#^{(2)(b)}$ for all other edges~$\#^{(2)(a)}$ of~$\gamma^a$ and~$\#^{(2)(b)}$ of~$\gamma^b$ (since the sets $A$, $B$, $C$, $D$ are fixed).
Then the wedge ordered set of edges~$\sfE(\gamma^a)$ of~$\gamma^a$ satisfies the equality
\begin{align*}
\sfE(\gamma^a) &= E^{(0)(a)} \wedge E^{(1)(a)} \wedge I^{(2)(a)} \wedge II^{(2)(a)}\wedge\cdots\wedge K^{(2)(a)}\\
&= E^{(1)(b)} \wedge E^{(0)(b)} \wedge I^{(2)(b)} \wedge II^{(2)(b)}\wedge\cdots\wedge K^{(2)(b)}\\
&= -E^{(0)(b)} \wedge E^{(1)(b)} \wedge I^{(2)(b)} \wedge II^{(2)(b)}\wedge\cdots\wedge K^{(2)(b)}\\
&= -\sfE(\gamma^b).
\end{align*}
Hence the graph~$\gamma^a$ is cancelled by the graph~$\gamma^b$. 
All \emph{distant} blown\/-\/up graphs thus cancel in~$\Id\circ\Id(\gamma^2)$.

Let $\gamma^c \mathrel{{:}{=}} %\coloneqq 
(\gamma^2_{v,\pi})_{i^{(1)},\tau}$ be a nested blown\/-\/up graph in the sum $\Id\circ\Id(\gamma^2)$, for $i=1$ or~$i=2$. Here a vertex~$v$ is blown up first and the vertex $i^{(1)}$, produced by the first blow up, is blown up second. Without loss of generality, say~$i=1$. Then the graph~$\gamma^c$ is obtained in the sum $\Id(B_\edge(v))$. Locally, near the newly inserted edges, the graph~$\gamma^c$ looks as follows: 
\[
^{A}_{1^{(0)}}\!\!\bullet\!\!\!\frac{\,E^{(0)\,\,\,\,\,}}{}\!\!\!\!\!\!^{\,\,B}_{2^{(0)}}\!\!\!\bullet\!\!\!\frac{\,\,\,E^{(1)\,}}{}\!\!\!\bullet^{\,\,C}_{2^{(1)}},
\]
where the sets $A$, $B$, $C$ next to the vertices are the sets of edges attached to the respective vertices. By the definition of a blow-up we have that %\marginpar{Switch R=L}
\[%\begin{align*}
\bar{N}(v) = A\sqcup B \sqcup C, \qquad
\bar{N}(1^{(0)}) = A\sqcup \{2^{(0)}\}, \qquad
\bar{N}(2^{(0)}) = B\sqcup \{1^{(0)},2^{(1)}\}, \qquad
\bar{N}(2^{(1)}) = C\sqcup \{2^{(0)}\}.
\]%\end{align*}
We can rewrite the ordered partitions~$\pi$ and~$\tau$ as 
\[
\pi=(A\sqcup B,C)\quad \text{and} \quad \tau = (A,B\sqcup\{E^{(1)}\}).
\]
Now we pair the graph $\gamma^c=(\gamma^2_{v,\pi})_{i^{(1)},\tau}$ with the graph $\gamma^d
\mathrel{{=}{:}} %\coloneqq 
(\gamma^2_{v,\pi'})_{2^{(1)},\tau'}$, where the ordered partitions are given by
\[
\pi' = (A,B\sqcup C)\quad \text{and}\quad \tau'= (\{E^{(1)}\}\sqcup B,C).
\]
Locally, near the inserted edges, the graph~$\gamma^b$ looks as follows,
\[
^{A}_{1^{(1)}}\!\!\bullet\!\!\!\frac{\,\,E^{(1)\,\,\,}}{}\!\!\!\!\!\!^{\,\,B}_{1^{(0)}}\!\!\!\bullet\!\!\!\frac{\,\,\,E^{(0)\,}}{}\!\!\!\bullet^{\,\,C}_{2^{(0)}}.
\]
Since the labelling of vertices can be chosen arbitrarily, the graphs~$\gamma^c$ and~$\gamma^d$ are topologically equal under the %following 
matching of edges: $E^{(0)(c)} = E^{(1)(d)}$, $E^{(1)(c)} = E^{(0)(d)}$, and $\#^{(2)(c)} = \#^{(2)(d)}$ for all other edges~$\#^{(2)(c)}$ of~$\gamma^c$ and~$\#^{(2)(d)}$ of~$\gamma^d$ (since the sets $A$, $B$, $C$ are fixed).
Then the wedge ordered set of edges~$\sfE(\gamma^c)$ of $\gamma^c$ satisfies the equality similar to the one we have seen before,
\begin{align*}
\sfE(\gamma^c) &= E^{(0)(c)} \wedge E^{(1)(c)} \wedge I^{(2)(c)} \wedge II^{(2)(c)}\wedge\cdots\wedge K^{(2)(c)}\\
&= E^{(1)(d)} \wedge E^{(0)(d)} \wedge I^{(2)(d)} \wedge II^{(2)(d)}\wedge\cdots\wedge K^{(2)(d)}\\
&= -E^{(0)(d)} \wedge E^{(1)(d)} \wedge I^{(2)(d)} \wedge II^{(2)(d)}\wedge\cdots\wedge K^{(2)(d)}\\
&= -\sfE(\gamma^d).
\end{align*}
Hence the graph~$\gamma^c$ is cancelled by the graph~$\gamma^d$. 
Thus all \emph{nested} blown\/-\/up graphs cancel in~$\Id\circ\Id(\gamma^2)$.
The proof is complete.
\end{proof}

\section{The bracket with a zero graph}
\noindent%
%The definition of the Lie bracket is recalled in \cite{JNMP}. 
%A necessary condition for it to be well defined is the statement given in Theorem \ref{TheoBracketZero}. 
We begin with recalling necessary %some 
properties of group actions. % because they are used in the proof of this theorem.

\begin{rem}\label{Act}
Let $\gamma\in\Gra$~be a zero graph, i.e.\ it suppose has a symmetry given by a permutation~$\sigma$ of vertices and edges that induces a parity odd permutation of edges. Since the permutation~$\sigma$ acts\footnote{Any permutation~$\sigma$ that defines a symmetry of a graph~$\gamma$ acts on the space~$X_{\gamma}$ of vertices and edges in a particular way: vertices are sent to vertices and edges are sent to edges. Hence we could also consider~$\sigma$ acting on the product space $\sfV (\gamma) \times \sfE (\gamma)$ of the sets of vertices and %with the set of 
edges.% 
%However, it turns out that this would be inconvenient for the notation in the proof of Theorem \ref{TheoBracketZero}.
}
%%%
on the set~$X_{\gamma}$ of vertices and edges of the graph~$\gamma$, 
it acts\footnote{\label{ActConfusing}%
A permutation~$\sigma$ acts here on the power set $P(X_\gamma)$ in the sense that the power set~$P(X_\gamma)$ itself stays %is left 
invariant under~$\sigma$, not the individual elements of the power set. In turn, the permutation~$\sigma$ acts on the product set $P(X_{\gamma}) \times \cdots \times P(X_{\gamma})$ in the sense that this product set is %left 
invariant under~$\sigma$, not the elements of the product set.}
%%%
naturally on the power set $P(X_{\gamma})$ of $X_{\gamma}$, namely as follows: $\sigma(S) = \{\sigma(s):s\in S\}$ for~$S\in P(X_{\gamma})$. Hence 
$\sigma$~acts %\footnotemark[\ref{ActConfusing}] 
naturally on any finite product space of the power set~$P(X_{\gamma})$:
indeed, %. Namely as follows: 
$\sigma\bigl(\,(S_1$,$S_2$,$\cdots$,$S_n)\,\bigr)=(\sigma(S_1)$,$\sigma(S_2)$,$\cdots$,$\sigma(S_n))$ for $(S_1$,$S_2$,$\cdots$,$S_n))\in P(X_{\gamma}) \times \cdots \times P(X_{\gamma})$. This enables us to speak of the orbit $O_{(S_1,S_2,\cdots,S_n)}$ of an ordered set $(S_1$,$S_2$,$\cdots$,$S_n)\in P(X_{\gamma})\times \cdots \times P(X_{\gamma})$.
\end{rem}

\begin{theor}\label{TheoBracketZero}
Let $\gamma^\zstroke$ and $\gamma^2$ be formal sums of graphs in~$\Gra$ and let $\gamma^\zstroke$~be a sum of zero graphs. Then the Lie bracket $[\gamma^\zstroke,\gamma^2]$ vanishes in~$\Gra$.
\end{theor}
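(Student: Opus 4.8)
The plan is to reduce to a single zero graph by bilinearity and then to show that \emph{each} of the two insertion terms, $\gamma^\zstroke\circ_i\gamma^2$ and $\gamma^2\circ_i\gamma^\zstroke$, already vanishes in~$\Gra$ on its own; the relative sign $(-)^{\#\sfE(\gamma^\zstroke)\cdot\#\sfE(\gamma^2)}$ in the bracket~\eqref{EqBracket} then plays no role. So I would first take $\gamma^\zstroke$ to be a single zero graph on $n_0$~vertices and $\gamma^2$ a single graph. By definition $\gamma^\zstroke$ carries a symmetry~$\sigma$ whose induced edge permutation~$\sigma_E$ is parity\/-\/odd, i.e.\ $\operatorname{sign}(\sigma_E)=-1$.

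For $\gamma^\zstroke\circ_i\gamma^2$ I would fix $v\in\sfV(\gamma^2)$ and study the blow\/-\/up $B_{\gamma^\zstroke}(v)=\sum_{\pi\in\Pi_{n_0}(v)}\gamma^2_{v,\pi}$, which distributes the external edges~$\bar N(v)$ over the vertices of the inserted copy of~$\gamma^\zstroke$. Letting $\sigma$ act on the index set~$\Pi_{n_0}(v)$ by relabelling blocks through~$\sigma_V$ (sending the partition with block~$S_w$ at vertex~$w$ to the one with block~$S_{\sigma_V^{-1}(w)}$ at~$w$), as in Remark~\ref{Act}, the fact that~$\sigma$ is a graph automorphism makes $\gamma^2_{v,\pi}$ and $\gamma^2_{v,\pi^\sigma}$ topologically equal. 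Under this identification~$\sigma$ fixes every external edge and permutes only the internal edges of~$\gamma^\zstroke$ by~$\sigma_E$, so comparing the common ordering $\sfE(\gamma^\zstroke)\wedge\sfE(\gamma^2)$ yields $\gamma^2_{v,\pi}=\operatorname{sign}(\sigma_E)\,\gamma^2_{v,\pi^\sigma}=-\gamma^2_{v,\pi^\sigma}$ in~$\Gra$, exactly as in the edge\/-\/ordering computations of Lemma~\ref{LeavesCancel} and Proposition~\ref{Propddzero}.

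Next I would run the orbit argument: decompose $\Pi_{n_0}(v)$ into $\langle\sigma\rangle$-orbits. On an orbit of length~$\ell$, iterating the relation gives $\gamma^2_{v,\pi^{\sigma^j}}=(-1)^j\gamma^2_{v,\pi}$, so the orbit contributes $\bigl(\sum_{j=0}^{\ell-1}(-1)^j\bigr)\gamma^2_{v,\pi}$. For even~$\ell$ this alternating sum is~$0$; for odd~$\ell$ the closure relation $\gamma^2_{v,\pi}=(-1)^\ell\gamma^2_{v,\pi}$ forces $\gamma^2_{v,\pi}=\boldsymbol{0}\in\Gra$, i.e.\ the term is itself a zero graph. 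Either way every orbit sums to zero, whence $B_{\gamma^\zstroke}(v)=0$ and $\gamma^\zstroke\circ_i\gamma^2=0$ in~$\Gra$. For $\gamma^2\circ_i\gamma^\zstroke=\sum_u\sum_\rho(\gamma^\zstroke)_{u,\rho}$ the same mechanism applies, except that~$\sigma$ now moves both the blown\/-\/up vertex $u\mapsto\sigma_V(u)$ and the partition $\rho=(T_1,\dots,T_{n_2})$ of $\bar N(u)\subseteq\sfE(\gamma^\zstroke)$ via $\sigma(\rho)=(\sigma_E(T_1),\dots,\sigma_E(T_{n_2}))$, so I would let it act on the index set of pairs~$(u,\rho)$. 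Here the internal $\gamma^\zstroke$-edges are again permuted by~$\sigma_E$ while the $\gamma^2$-edges stay fixed, giving $(\gamma^\zstroke)_{u,\rho}=-(\gamma^\zstroke)_{\sigma_V(u),\sigma(\rho)}$, and the identical orbit count makes $\gamma^2\circ_i\gamma^\zstroke$ vanish in~$\Gra$. Combining the two, $[\gamma^\zstroke,\gamma^2]=\boldsymbol{0}\in\Gra$.

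The main obstacle is the sign bookkeeping. Everything hinges on the claim that the isomorphism~$\sigma$ between paired blow\/-\/ups fixes all external edges and acts on the internal edges of~$\gamma^\zstroke$ exactly as~$\sigma_E$, so that the relating sign is precisely $\operatorname{sign}(\sigma_E)=-1$, \emph{independently} of the chosen $\pi$ (respectively of the pair~$(u,\rho)$); I would make this rigorous by tracking the wedge\/-\/ordered edge set through the isomorphism, in the style of the cancellation computations already carried out above. A secondary subtlety, which must not be overlooked, is the odd\/-\/orbit case: there cancellation in pairs fails, and one instead invokes the closure relation to conclude that the surviving term is a genuine zero graph rather than a cancelling partner, matching the dichotomy announced in the Claim of the Introduction.
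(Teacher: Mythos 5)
Your proposal is correct and follows essentially the same route as the paper: both insertion terms of the bracket are killed separately by decomposing the blow\/-\/up index set into orbits of the symmetry, using the relation with sign $\operatorname{sign}(\sigma_E)=-1$ along each orbit step so that even\/-\/length orbits cancel in disjoint pairs and odd\/-\/length orbits consist of zero graphs. Your only (harmless) streamlining is to act on the pairs $(u,\rho)$ from the outset in the term $\gamma^2\circ_i\gamma^\zstroke$, whereas the paper first takes orbits of vertices under~$\sigma_V$ and refines to orbits of the pairs $(v_\circ,\pi)$ only when the vertex\/-\/orbit length is odd.
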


%%% Rem: A "zero" graph is a single graph! (or a sum of such graphs, each of which is "zero" by itself.

\begin{proof}%[Proof of Theorem~\ref{TheoBracketZero}]
By linearity of the Lie bracket, it suffices to show the statement for sums of graphs consisting of one term. Let $\gamma^\zstroke$ and $\gamma^2$~be single graphs 
in the space~$\Gra$.
% on $n_\zstroke$ and $n_2$ vertices and with $k_\zstroke$ and $k_2$ edges, respectively.
Let $\gamma^\zstroke$~be a zero graph.
Then this graph has a symmetry, a permutation~$\sigma$ of edges and vertices that induces a parity\/-\/odd permutation~$\sigma_E$ of edges. 
%Consider the respective wedge ordered sets of edges $\sfE (\gamma^\zstroke) = I^{(\zstroke)} \wedge II^{(\zstroke)} \wedge \cdots \wedge K^{(\zstroke)}$ and $\sfE (\gamma^2) = I^{(2)} \wedge II^{(2)} \wedge \cdots \wedge K^{(2)}$ of graphs $\gamma^{\zstroke}$ and $\gamma^{2}$ and introduce respective labellings of vertices $1^{(\zstroke)},2^{(\zstroke)},\dots,n_z^{(\zstroke)}$ and $1^{(2)},2^{(2)},\dots,n_2^{(2)}$.
The Lie bracket of~$\gamma^\zstroke$ and~$\gamma^2$ can be expressed in terms of blow\/-\/ups: 
%as follows
\begin{equation}\label{Bracket}
[\gamma^\zstroke,\gamma^2] = \gamma^\zstroke \circ_i \gamma^2 - (-)^{\# \sfE (\gamma^\zstroke)\# \sfE (\gamma^2)} \gamma^2 \circ_i \gamma^\zstroke
= \sum_{v\in \sfV (\gamma^2)} B_{\gamma^\zstroke}(v) - (-)^{k_z k_2} \sum_{v\in \sfV (\gamma^\zstroke)} B_{\gamma^2}(v).
\end{equation}

\textbf{Part~1.} %First, 
Let us prove that the subtrahend $(-)^{\# \sfE (\gamma^\zstroke)\# \sfE (\gamma^2)} \sum_{v\in \sfV (\gamma^\zstroke)} B_{\gamma^2}(v)$ is equal to zero in~$\Gra$.

%\underline{FROM HERE FEEDBACK SHOULD STILL BE INCORPORATED} \marginpar{??? :: AVK.}

Let $v_1$, $v_2$ be vertices in the graph~$\gamma^\zstroke$ and denote by~$O_{v_1}$ and~$O_{v_2}$ their respective orbits under~$\sigma_V$. %Recall 
It is standard that if two orbits~$O_{v_1}$ and~$O_{v_2}$ intersect, then they coincide. 
   %(see \cite{NathanCarter}).
Denote by~$\mathbf{O}^\zstroke$ the set of all orbits of vertices in~$\gamma^\zstroke$ under~$\sigma_V$. Note that the set~$\sfV(\gamma^\zstroke)$ of vertices in~$\gamma^\zstroke$ is equal to the disjoint union of orbits in~$\mathbf{O}^\zstroke$. Then we can rewrite the subtrahend in equation (\ref{Bracket}) as follows,
\[
(-)^{k_z k_2} \sum_{v\in \sfV (\gamma^\zstroke)} B_{\gamma^2}(v) = (-)^{k_z k_2} \sum_{O\in \mathbf{O}^\zstroke} \sum_{v\in O} B_{\gamma^2}(v).
\]
Hence, in order to show that the subtrahend vanishes, it suffices to show that for each orbit~$O\in\mathbf{O}^\zstroke$ the sum of $\gamma^2$-\/blow\/-\/ups $\sum_{v\in O} B_{\gamma^2}(v)$ is equal to zero in the space~$\Gra$.
Let $O\in \mathbf{O}^\zstroke$~be the orbit of a vertex under~$\sigma_V$; denote by~$\ell$ the length of this orbit. %\footnote{orbit of $(v,\bar{N}(v)$ can be longer}
Let $v_\circ$~be a vertex in the orbit~$O$. Then $O$~can be seen as the orbit~$O_{v_\circ}$ of~$v_\circ$. We can rewrite and expand the sum of $\gamma^2$-\/blow\/-\/ups in the following way,
\begin{equation}\label{Sums}
\sum_{v\in O} B_{\gamma^2}(v) = \sum_{i=0}^{\ell-1} B_{\gamma^2}(\sigma^i(v_\circ)) = \sum_{i=0}^{\ell-1} \sum_{\pi\in\Pi_{n_2}(\sigma^i(v_\circ))} \gamma^\zstroke_{\sigma^i(v_\circ),\pi}.
\end{equation}
By the definition of a symmetry and by the properties of orbits we have that $\Pi_{n_2}(\sigma^i(v_\circ)) = \{\sigma^i(\pi) : \pi\in\Pi_{n_2}(v_\circ)\}$, where $\sigma$~acts on the ordered set~$\pi$ like it does on ordered sets in Remark~\ref{Act}. Using this, we %can
rewrite equation (\ref{Sums}) once again:
\begin{equation}\label{TheSum}
\sum_{v\in O} B_{\gamma^2}(v) = \sum_{i=0}^{\ell-1} \sum_{\pi\in\Pi_{n_2}(\sigma^i(v_\circ))} \gamma^\zstroke_{\sigma^i(v_\circ),\pi} = \sum_{\pi\in\Pi_{n_2}(v_\circ)} \sum_{i=0}^{\ell-1} \gamma^\zstroke_{\sigma^i(v_\circ),\sigma^i(\pi)}.
\end{equation}
%\sum_{i=0}^{l-1} \sum_{\pi\in\Pi_{n_2}(v_\circ)} \gamma^\zstroke_{\sigma^i(v_\circ),\sigma^i(\pi)}.
%Hence, in order to show that the sum $\sum_{v\in O} B_{\gamma^2}(v)$ vanishes, it suffices to show that the sum $\sum_{i=0}^{l-1} \gamma^\zstroke_{\sigma^i(v_\circ),\sigma^i(\pi)}$ vanishes for every ordered partition $\pi$ in $\Pi_{n_2}(v_\circ)$.
%%%
In order to show that the right hand side of the above equation vanishes, it suffices to show that 
\begin{equation}\label{EvenSum}
\sum_{i=0}^{\ell-1} \gamma^\zstroke_{\sigma^i(v_\circ),\sigma^i(\pi)} = 0,
\end{equation}
for all $\pi\in\Pi_{n_2}(v_\circ)$.
Let $\pi_\circ \mathrel{{:}{=}} %\coloneqq 
(S_1, S_2,\cdots, S_{n_2})$ be an ordered partition in~$\Pi_{n_2}(v_\circ)$. Consider 
two consecutive %the 
terms $\gamma^\zstroke_{v_\circ,\pi_\circ} \mathrel{{=}{:}} \gamma^a$ and $\gamma^\zstroke_{\sigma(v)_\circ,\sigma(\pi_\circ)} \mathrel{{=}{:}} \gamma^b$ in the above sum.
   %\marginpar{Picture: Theorem.OrbitSumTerms}
%, with the respective edge orderings $\sfE (\gamma^a) = I^{(a)} \wedge II^{(a)} \wedge \cdots \wedge K^{(a)}$ and $\sfE (\gamma^b) = I^{(b)} \wedge II^{(b)} \wedge \cdots \wedge K^{(b)}$
By definition of a symmetry we have that $\gamma^a$~is topologically equal to~$\gamma^b$ under the %following 
matching of edges $I^{(z)(a)} = \sigma_E(I^{(\zstroke)})^{(b)}$, $II^{(z)(a)} = \sigma_E(II^{(\zstroke)})^{(b)}$, $\dots$, $K^{(z)(a)} = \sigma_E(K^{(\zstroke)})^{(b)}$ and $I^{(2)(a)} = I^{(2)(b)}$, $II^{(2)(a)} = II^{(2)(b)}$, $\dots$, $K^{(2)(a)} = K^{(2)(b)}$.
Since $\sigma_E$~is a parity\/-\/odd permutation, the wedge ordered set of edges of~$\gamma^a$ satisfies the chain of equalities
\begin{align*}
\sfE(\gamma^a) &= I^{(z)(a)} \wedge II^{(z)(a)} \wedge \dots \wedge K^{(z)(a)} \wedge I^{(2)(a)} \wedge II^{(2)(a)} \wedge \dots \wedge K^{(2)(a)}\\
&= \sigma_E(I^{(\zstroke)})^{(b)} \wedge \sigma_E(II^{(\zstroke)})^{(b)} \wedge \dots \wedge \sigma_E(K^{(\zstroke)})^{(b)} \wedge I^{(2)(b)} \wedge II^{(2)(b)} \wedge \dots \wedge K^{(2)(b)}\\
&= - I^{(z)(b)} \wedge II^{(z)(b)} \wedge \dots \wedge K^{(z)(b)} \wedge I^{(2)(b)} \wedge II^{(2)(b)} \wedge \dots \wedge K^{(2)(b)}\\
&= - \sfE (\gamma^b).
\end{align*}
Therefore, $\gamma^\zstroke_{v_\circ,\pi_\circ} = \gamma^a = - \gamma^b = - \gamma^\zstroke_{\sigma(v_\circ),\sigma(\pi_\circ)}$. Since $v_\circ\in O$~is chosen arbitrarily and since the elements in~$O$ can be written as~$\sigma^i(v_\circ)$ for $0\leqslant i <\ell$, it follows that
\begin{equation}\label{Odd}
\gamma^\zstroke_{\sigma^i(v_\circ),\sigma^i(\pi_\circ)} = - \gamma^\zstroke_{\sigma^{i+1}(v_\circ),\sigma^{i+1}(\pi_\circ)}.
\end{equation}

Assume $\ell$~is even. Let us make a (disjoint) pairing of all graphs in equation~\eqref{EvenSum} and show that all pairs cancel. Specifically, we pair all graphs in Eq.~\eqref{EvenSum} as follows: $\gamma^\zstroke_{\sigma^i(v_\circ),\sigma^i(\pi_\circ)}$ is paired with $\gamma^\zstroke_{\sigma^{i+1}(v_\circ),\sigma^{i+1}(\pi_\circ)}$ for $i=0$,$2$,$4$,$\dots$,$\ell-2$. All these pairs cancel by~% equation 
\eqref{Odd}, so~$\sum_{v\in O} B_{\gamma^2}(v)=0$.

Assume $\ell$~is odd. Now we cannot organize all graphs in disjoint cancelling pairs since the 
sum~\eqref{EvenSum} consists of an odd number %amount 
of terms. Let us %We shall 
rewrite equation~\eqref{TheSum} in such a way that we sum over the orbits of ordered sets. If the orbits have even length, we can organize the graphs into cancelling pairs again, as in the above case. If the orbits have odd length, the graphs turn out to be zero graphs.

In~\eqref{TheSum}, $\sigma$~determines a parity\/-\/odd permutation of edges, and 
$\sigma^\ell(v_\circ)=v_\circ$ for a given vertex~$v_\circ$. 
Now consider the orbit~$O_{\pi_circ}$ of an ordered partition~$\pi_circ$ 
of edges incident to~$v_\circ$ under the permutation~$\sigma^\ell$; denote by~$\ell_{\pi_\circ}$ the length of~$O_{\pi_circ}$. Also, denote by~$\lambda$ the length of the orbit~$O_{(v_\circ,\pi_\circ)}$ of the ordered set~$(v_\circ, \pi_\circ) \mathrel{{:}{=}} %\coloneqq 
(\{v_\circ\}$, $S_1$, $S_2$, $\cdots$, $S_{n_2})$, see Remark~\ref{Act}
on p.~\pageref{Act} above.
%\in P(X)\times P(X)\times \cdots \times P(X)
Note that $\ell_{\pi_\circ}=\lambda/\ell$. Indeed, %Note that 
$\lambda$~is a multiple of~$\ell$ since one must have $\sigma^\lambda(v_\circ)=v_\circ$ in order to have that $\sigma^\lambda ((v_\circ,\pi_\circ))=(v_\circ,\pi_\circ)$.

First, assume $\lambda$~is odd. Then we claim that $\gamma^\zstroke_{v_\circ,\pi_\circ}$~is a zero graph. Namely, 
\[
\gamma^\zstroke_{v_\circ,\pi_\circ} = \gamma^\zstroke_{\sigma^\lambda(v_\circ),\sigma^\lambda(\pi_\circ)} = - \gamma^\zstroke_{v_\circ,\pi_\circ},
\]
where the second equality follows from equation~\eqref{Odd}.

Now assume $\lambda$~is even. Then $%\frac
{\lambda}/{\ell}=\ell_{\pi_\circ}$~is even. Denote by~$\tilde{\mathbf{O}}^\zstroke(v')$ the set of orbits --\,under~$\sigma$\,-- of ordered sets of the form $(\{v'\}$,$S_1'$,$S_2'$,$\cdots$,$S_{n_2}')$, where $v'\in \sfV (\gamma^\zstroke)$ and $\pi' \mathrel{{:}{=}} %\coloneqq 
(S_1$,$S_2$,$\cdots$,$S_{n_2}) \in \Pi_{n_2}(v')$. 
   %Since the orbits ...
%Below we 
Let us rewrite $\sum_{v\in O} B_{\gamma^2}(v)$ again, this time by summing over the disjoint orbits of the pairs~$(v_0,\pi)$ under~$\sigma$, that is, summing over the elements in~$\tilde{\mathbf{O}}^\zstroke(v_\circ)$. 
(These orbits can be \emph{longer} than the orbit~$O_{v_\circ}$ of~$v_\circ$ since the orbit of an ordered partition~$\pi$ can be longer.) 
%Again 
By the definition of a symmetry and by the properties of orbits of ordered sets (see %again 
Remark~\ref{Act}) we have that 
\begin{multline*}
\{(\sigma^i(v_\circ),\sigma^i(\pi)) : \pi\in\Pi_{n_2}(v_\circ), i=0,1,\dots,\ell-1\}\\
 = \{(\sigma^i(v_\circ),\sigma^i(\pi)) : O_{(v_\circ,\pi)}\in \tilde{\mathbf{O}}^\zstroke(v_\circ), i=1,2,\dots,\lambda(v_\circ,\pi)-1 \},
\end{multline*}
where $\lambda(v_\circ,\pi)$ is the length of the (long) orbit~$O_{(v_\circ,\pi)}$ of the ordered set~$(v_\circ,\pi)$.
Now %we 
re\-wri\-te equation~\eqref{TheSum} as
\[
\sum_{v\in O} B_{\gamma^2}(v) = \sum_{\pi\in\Pi_{n_2}(v_\circ)} \sum_{i=0}^{\ell-1} \gamma^\zstroke_{\sigma^i(v_\circ),\sigma^i(\pi)} = \sum_{O_{(v_\circ,\pi)}\in\tilde{\mathbf{O}}^\zstroke(v_\circ)} \sum_{i=0}^{\lambda(v_\circ,\pi)-1} \gamma^\zstroke_{\sigma^i(v_\circ),\sigma^i(\pi)}.
\]
In order to show that the above sum vanishes, it suffices to show that for every orbit $O_{(v_\circ,\pi)}\in\tilde{\mathbf{O}}^\zstroke(v_\circ)$ the sum 
\[
\sum_{i=0}^{\lambda(v_\circ,\pi)-1} \gamma^\zstroke_{\sigma^i(v_\circ),\sigma^i(\pi)}
\]
is equal to zero. Let $O_{(v_\circ,\pi_\circ)}\in\tilde{\mathbf{O}}^\zstroke(v_\circ)$. 
Again by the definition of a symmetry and by the properties of orbits of ordered sets we have
\begin{align*}
O_{(v_\circ,\pi_\circ)} &= \{(\sigma^i(v_\circ),\sigma^i(\pi)) : i=0,1,\dots,\lambda(v_\circ,\pi)-1 \}\\
 &= \bigsqcup_{0\leqslant j <\ell} \{(\sigma^{j+i\ell}(v_\circ),\sigma^{j+i\ell}(\pi_\circ)) : i=0,1,\dots,\frac{\lambda(v_\circ,\pi_\circ)}{\ell}-1 \}.
\end{align*}
Now it suffices to show that, for all $0\leqslant j <\ell$, the sum 
\[
\sum_{i=0}^{ -1+ %\frac
{\lambda(v_\circ,\pi_\circ)}/{\ell} } \gamma^\zstroke_{\sigma^{j+i\ell}(v_\circ),\sigma^{j+i\ell}(\pi_\circ)}
\]
vanishes. Since $\ell$~is odd, by equation~\eqref{Odd} we have that $\gamma^\zstroke_{v_\circ,\pi_\circ} = -\gamma^\zstroke_{\sigma^\ell(v_\circ),\sigma^\ell(\pi_\circ)}$. Since $%\frac
{\lambda}/{\ell}$ is even, we can pair~$\gamma^\zstroke_{v_\circ,\pi_\circ}$ with~$\gamma^\zstroke_{\sigma^\ell(v_\circ),\sigma^\ell(\pi_\circ)}$ for $i=0$,\ $2$,\ $4$,\ $\dots$,\ $%\frac
({\lambda}/{\ell})-2$. All pairs are disjoint and cancel, so $\sum_{v\in O} B_{\gamma^2}(v)=0$ and the subtrahend vanishes.

\textbf{Part~2.} What %There 
is left to prove is that the minuend in equation~\eqref{Bracket}, 
\[
\sum_{v\in \sfV (\gamma^2)} B_{\gamma^\zstroke}(v) = \sum_{v\in \sfV (\gamma^2)} \sum_{\pi\in\Pi_{n_\zstroke}(v)} \gamma^2_{v,\pi},
\]
is equal to zero in~$\Gra$. % as well.
%\\
%\underline{START OF THE MINUEND}\\
%Now we prove that the minuend, $\sum_{v\in \sfV (\gamma^2)} B_{\gamma^\zstroke}(v)$ , is equal to zero in Gra. 
%Denote by $\mathbf{O}^\zstroke(v)$... %\% \\
%Indeed it is not so simple...
It suffices to show that the sum $\sum_{\pi\in\Pi_{n_\zstroke}(v)} \gamma^2_{v_\circ,\pi}$ is zero in~$\Gra$ for any fixed vertex~$v_\circ\in\sfV(\gamma)$.
Denote by $\mathbf{O}^{\Pi_{n_\zstroke}(v_\circ)}$ the set of all orbits of ordered partitions in~$\Pi_{n_\zstroke}(v_\circ)$. %(These are ordered partitions of $\bar{N}(v_\circ)$). 
Then
\[
\sum_{\pi\in\Pi_{n_\zstroke}(v)} \gamma^2_{v_\circ,\pi} = \sum_{O\in \mathbf{O}^{\Pi_{n_\zstroke}(v_\circ)}} \sum_{\pi\in O} \gamma^2_{v_\circ,\pi}.
\]
Hence it suffices to show that $\sum_{\pi\in O} \gamma^2_{v_\circ,\pi}$ is equal to zero in~$\Gra$ for any fixed orbit~$O$ in~$\mathbf{O}^{\Pi_{n_\zstroke}(v_\circ)}$.

Take~$O_\circ$ in~$\mathbf{O}^{\Pi_{n_\zstroke}(v_\circ)}$ and let~$\pi_\circ\in O_\circ$.
The orbit~$O_\circ$ can be seen as the orbit~$O_{\pi_\circ}$ of~$\pi_\circ$, and
\begin{equation}\label{Ell}
\sum_{\pi\in O} \gamma^2_{v_\circ,\pi} = \sum_{i=0}^{\ell_{\pi_\circ} - 1} \gamma^2_{v_\circ,\sigma^i(\pi_\circ)},
\end{equation}
where $\ell_{\pi_\circ}$ is the length of the orbit~$O_{\pi_\circ}$.

Assume $\ell_{\pi_\circ}$~is even. Consider $\gamma^2_{v_\circ,\pi_\circ} 
\mathrel{{=}{:}} %\eqqcolon 
\gamma^a$ and $\gamma^2_{v_\circ,\sigma(\pi_\circ)} \mathrel{{=}{:}} %\eqqcolon 
\gamma^b$. The graphs~$\gamma^a$ and~$\gamma^b$ are topologically equal under the %following
matching of edges: $I^{(\zstroke)(b)} = \sigma(I^{(\zstroke)(a)})$, $II^{(\zstroke)(b)} = \sigma(II^{(\zstroke)(a)})$, $\ldots$, $K^{(\zstroke)(b)} = \sigma(K^{(\zstroke)(a)})$ and $\#^{(2)(b)} = \#^{(2)(a)}$ for all other respective edges in~$\gamma^{b}$ and~$\gamma^{a}$. The edge ordering of~$\gamma^b$ satisfies the equalities
\begin{align*}
\sfE(\gamma^b) &= I^{(\zstroke)(b)}\wedge II^{(\zstroke)(b)} \wedge \dots \wedge K^{(\zstroke)(b)} \wedge I^{(2)(b)} \wedge II^{(2)(b)} \wedge \dots \wedge K^{(2)(b)}\\
&= \sigma(I^{(\zstroke)})^{(a)}\wedge \sigma(II^{(\zstroke)})^{(a)} \wedge \dots \wedge \sigma(K^{(\zstroke)})^{(a)} \wedge I^{(2)(a)} \wedge II^{(2)(a)} \wedge \dots \wedge K^{(2)(a)}\\
&= - I^{(\zstroke)(a)}\wedge II^{(\zstroke)(a)} \wedge \dots \wedge K^{(\zstroke)(a)} \wedge I^{(2)(a)} \wedge II^{(2)(a)} \wedge \dots \wedge K^{(2)(a)}\\
&= - \sfE(\gamma^a).
\end{align*}
Hence $\gamma^2_{v_\circ,\pi_\circ} = \gamma^a = -\gamma^b = - \gamma^2_{v_\circ,\sigma(\pi_\circ)}$. Since the elements in~$O$ can be written as~$\sigma^i(\pi_\circ)$ for $0\leqslant i<\ell_{\pi_\circ}$, we have that 
\begin{equation}\label{Cancel}
\gamma^2_{v_\circ,\sigma^i(\pi_\circ)} = - \gamma^2_{v_\circ,\sigma^{i+1}(\pi_\circ)}
\end{equation}
for all $i=0$,$\ldots$,$\ell_{\pi_\circ}-2$. From the assumption that $\ell_{\pi_\circ}$~is even it follows that one can make a disjoint paring of all graphs in the right\/-\/hand side of equation~\eqref{Ell} such that all pairs cancel by equation~\eqref{Cancel}.

Assume that $\ell_{\pi_\circ}$~is odd. %Since $\sigma$ is an odd permutation, the orbit $O_{\sfE(\gamma^{(\zstroke)})}$ is of even length $\ell_{\sfE(\gamma^{(\zstroke)})}$. It follows that $\ell_{\pi_\circ}<\ell_{\sfE(\gamma^{(\zstroke)})}$.
Then the graph~$\gamma^2_{v_\circ,\pi_\circ} \mathrel{{=}{:}} %\eqqcolon 
\gamma^a$ is topologically equal to itself via the following matching of edges:
$I^{(\zstroke)(a)} = \sigma^{\ell_{\pi_\circ}}(I^{(\zstroke)(b)})$, $II^{(\zstroke)(a)} = \sigma^{\ell_{\pi_\circ}}(II^{(\zstroke)(b)})$, $\ldots$, $K^{(\zstroke)(a)} = \sigma^{\ell_{\pi_\circ}}(K^{(\zstroke)(b)})$ and $\#^{(2)(a)} = \#^{(2)(b)}$ for all other respective edges in~$\gamma^{a}$ and $\gamma^{b} \mathrel{{:}{=}} %\coloneqq 
\gamma^2_{v_\circ,\sigma^{\ell_{\pi_\circ}}}(\pi_\circ)$. The permutation $\sigma^{\ell_{\pi_\circ}}$ indeed satisfies $\sigma^{\ell_{\pi_\circ}}(\bar{N}(v)) = \bar{N}(\sigma^{\ell_{\pi_\circ}}(v))$ since the newly attached vertices stay %are left 
invariant, therefore it defines a symmetry. The edge ordering of~$\gamma^b$ satisfies the chain of equalities
\begin{align*}
\sfE(\gamma^a) &= I^{(\zstroke)(a)}\wedge II^{(\zstroke)(a)} \wedge \dots \wedge K^{(\zstroke)(a)} \wedge I^{(2)(a)} \wedge II^{(2)(a)} \wedge \dots \wedge K^{(2)(a)}\\
&= \sigma^{\ell_{\pi_\circ}}(I^{(\zstroke)})^{(b)}\wedge \sigma^{\ell_{\pi_\circ}}(II^{(\zstroke)})^{(b)} \wedge \dots \wedge \sigma^{\ell_{\pi_\circ}}(K^{(\zstroke)})^{(b)} \wedge I^{(2)(b)} \wedge II^{(2)(b)} \wedge \dots \wedge K^{(2)(b)}\\
&= - I^{(\zstroke)(b)}\wedge II^{(\zstroke)(b)} \wedge \dots \wedge K^{(\zstroke)(b)} \wedge I^{(2)(b)} \wedge II^{(2)(b)} \wedge \dots \wedge K^{(2)(b)}\\
&= - \sfE(\gamma^b),
\end{align*}
where the minus sign follows from the assumption that $\ell_{\pi_\circ}$~is odd. Then we have about the graphs that $\gamma^2_{v_\circ,\pi_\circ} = \gamma^a = -\gamma^b = -\gamma^2_{v_\circ,\sigma^{\ell_{\pi_\circ}}(\pi_\circ)}$,
hence $\gamma^2_{v_\circ,\pi}$~is a zero graph. Since $v_\circ$~is chosen arbitrarily, all graphs in the sum~$\sum_{\pi\in O} \gamma^2_{v_\circ,\pi}$ are zero graphs, so this sum is equal to zero.

It follows that the minuend vanishes (since it is equal to a sum of zero graphs and/or graphs with a vanishing coefficient) and the statement follows.
\end{proof}

\begin{cor}\label{CordZero}
For any zero graph~$\gamma^\zstroke$, its differential $\Id(\gamma^\zstroke)=[\edge,\gamma^\zstroke]$ vanishes in~$\Gra$.
\end{cor}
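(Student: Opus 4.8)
The plan is to deduce this immediately from Theorem~\ref{TheoBracketZero}, which already does all the work. By definition the differential is $\Id(\gamma^\zstroke)=[\edge,\gamma^\zstroke]$, so the claim amounts to saying that the bracket of the edge graph~$\edge$ with a zero graph vanishes in~$\Gra$. Theorem~\ref{TheoBracketZero} asserts that $[\gamma^\zstroke,\gamma^2]=\boldsymbol{0}\in\Gra$ for \emph{any} formal sum of graphs~$\gamma^2$ whenever $\gamma^\zstroke$~is a sum of zero graphs; the only discrepancy with the present situation is that there the zero graph occupies the first slot of the bracket, whereas in $[\edge,\gamma^\zstroke]$ it occupies the second.

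First I would bridge this gap by invoking the graded skew\/-\/symmetry of the bracket. This is immediate from the defining formula~\eqref{EqBracket}: for any two single graphs one has
\[
[\gamma_1,\gamma_2]=-(-)^{\#\sfE(\gamma_1)\cdot\#\sfE(\gamma_2)}[\gamma_2,\gamma_1]
\]
already in the vector space of graphs, since the sign $(-)^{\#\sfE(\gamma_1)\cdot\#\sfE(\gamma_2)}$ squares to~$1$; this relation therefore descends to~$\Gra$. Taking $\gamma_1=\edge$ (a legitimate single graph, with $\#\sfE(\edge)=1$) and $\gamma_2=\gamma^\zstroke$ gives $\Id(\gamma^\zstroke)=[\edge,\gamma^\zstroke]=-(-)^{k_z}[\gamma^\zstroke,\edge]$.

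Then I would apply Theorem~\ref{TheoBracketZero} with the choice $\gamma^2=\edge$. The edge graph~$\edge$ is an admissible second argument, so the theorem yields $[\gamma^\zstroke,\edge]=\boldsymbol{0}\in\Gra$. Combining this with the skew\/-\/symmetry identity above, we conclude $\Id(\gamma^\zstroke)=\boldsymbol{0}\in\Gra$, as claimed. There is no genuine obstacle here: the entire content of the corollary is already contained in Theorem~\ref{TheoBracketZero}, and the passage from the first to the second slot of the bracket is handled by a single\/-\/line sign computation. Finally, by linearity of~$\Id$ the argument extends at once from a single zero graph to an arbitrary sum of zero graphs.
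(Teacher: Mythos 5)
Your proposal is correct and matches the paper's own justification: the paper likewise treats the corollary as the special case of Theorem~\ref{TheoBracketZero} with $\gamma^2=\pm\edge$, the sign~$\pm$ being exactly the graded skew\/-\/symmetry factor you compute explicitly. (The paper additionally remarks that the partitions may be restricted to those without empty components by Lemma~\ref{LeavesCancel}, but since you work directly with definition~\eqref{d} of~$\Id$ as the full bracket, you do not need that point.)
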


This proposition is a special case of Theorem~\ref{TheoBracketZero}: namely, where $\gamma^2 = \pm \edge$ and the set of ordered partitions~$\Pi$ is restricted to the subset~$\bar{\Pi}$ of ordered partitions without empty components. (We recall that by Lemma~\ref{LeavesCancel}, the leaved graphs cancel out.)
%%%
%The proof of Corollary \ref{CordZero} is completely analogous to a special case of the the proof of Theorem \ref{TheoBracketZero}, where $\gamma^2 = -\edge$ and the set of ordered partitions $\Pi$ is restricted to the subset $\bar{\Pi}$ of ordered partitions without empty components. (Here we use that the leaved graphs cancel.)
%%%
The fact that $\Id(\gamma^\zstroke)=\boldsymbol{0}\in\Gra$, i.e.\ that after all the cancellations (see the above proof of Theorem~\ref{TheoBracketZero})
the differential of a \emph{zero} graph~$\gamma^\zstroke$ is a sum of zero graphs, can also be seen directly as follows. 

%Alternative [R.B.] proof of Corollary~\ref{CordZero}.
\begin{proof}[Sketch of the proof]
Let $\sigma$~be a symmetry of a zero graph~$\gamma^\zstroke$ such that $\gamma^\zstroke+\sigma(\gamma^\zstroke)=0$ in the vector space of graphs with ordered edge sets. Then
$0=\Id(0)=\Id\bigl(\gamma^\zstroke+\sigma(\gamma^\zstroke)\bigr) = \Id(\gamma^\zstroke)+
\Id\bigl(\sigma(\gamma^\zstroke)\bigr)= \Id(\gamma^\zstroke)+\sigma'\bigl(\Id(\gamma^\zstroke)\bigr)$, where the permutations~$\sigma'$ act on the old edges from~$\gamma^\zstroke$ and map the newly inserted edges~$\edge$ to the new edges (brought in by the vertex blow\/-\/ups in~$\Id$). Because the left-{} and right\/-\/hand sides of the equality $\Id(\gamma^\zstroke)=-\sigma'\bigl(\Id(\gamma^\zstroke)\bigr)$ contains the sums of topologically isomorphic graphs (one in the l.\/-\/h.s.\ versus one in the r.\/-\/h.s.), the sum of graphs~$\Id(\gamma^\zstroke)$ is equal to minus itself, whence the assertion.
\end{proof}

\section{The graded Jacobi identity}

\begin{lemma}
The bracket $[\cdot,\cdot]$ is graded skew\/-\/symmetric, i.e.\ for any graphs $\gamma^1$,\ $\gamma^2\in\Gra$ on $k_1$ and $k_2$ vertices, respectively, we have that 
$[\gamma^2,\gamma^1] = - (-)^{k_1 k_2}[\gamma^1,\gamma^2]$.
(This is clear from~\eqref{EqBracket}.)
\end{lemma}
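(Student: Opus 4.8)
The plan is to obtain the identity by a direct expansion of the defining formula~\eqref{EqBracket}, with $k_i\mathrel{{:}{=}}\#\sfE(\gamma^i)$ the number of edges of~$\gamma^i$ (this being the grading that enters the sign in~\eqref{EqBracket}). By bilinearity it suffices to treat single graphs. First I would record the two brackets side by side,
\[
[\gamma^1,\gamma^2] = \gamma^1\oi\gamma^2 - (-)^{k_1 k_2}\,\gamma^2\oi\gamma^1, \qquad
[\gamma^2,\gamma^1] = \gamma^2\oi\gamma^1 - (-)^{k_2 k_1}\,\gamma^1\oi\gamma^2,
\]
so that the proof reduces to comparing these two expressions.

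The one point deserving a remark — and the only place where a spurious sign could enter — is the edge ordering carried by each insertion. By the convention fixed in the definition of~$\oi$, every term of $\gamma^a\oi\gamma^b$ is endowed with the ordering $\sfE(\gamma^a)\wedge\sfE(\gamma^b)$, which is determined solely by which graph is inserted into which, and \emph{not} by the order of the arguments in the surrounding bracket. Consequently the summand $\gamma^1\oi\gamma^2$ carries one and the same ordering $\sfE(\gamma^1)\wedge\sfE(\gamma^2)$ in both displayed lines, and likewise $\gamma^2\oi\gamma^1$ carries $\sfE(\gamma^2)\wedge\sfE(\gamma^1)$ in both; hence the two brackets may be compared term by term, with no reordering of wedge factors required.

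Granting this, the assertion is a one\/-\/line computation already at the level of the ambient vector space of graphs with ordered edge sets (hence \emph{a fortiori} in~$\Gra$): multiplying the first line by $-(-)^{k_1 k_2}$ and using $k_1 k_2 = k_2 k_1$ together with $(-)^{2k_1 k_2}=1$ gives
\[
-(-)^{k_1 k_2}[\gamma^1,\gamma^2]
= -(-)^{k_1 k_2}\,\gamma^1\oi\gamma^2 + \gamma^2\oi\gamma^1
= \gamma^2\oi\gamma^1 - (-)^{k_1 k_2}\,\gamma^1\oi\gamma^2
= [\gamma^2,\gamma^1].
\]
Extending by bilinearity to arbitrary formal sums then yields the claim. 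I expect no genuine obstacle here: the identity is purely formal, and the only thing to watch is the bookkeeping of the edge orderings in the two insertions, which the remark above settles once and for all.
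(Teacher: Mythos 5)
Your proposal is correct and follows exactly the route the paper intends: the paper's entire ``proof'' is the parenthetical remark that the identity is clear from~\eqref{EqBracket}, and your one\/-\/line expansion with the sign $(-)^{2k_1k_2}=1$ is precisely that verification, with the added (and welcome) observation that the edge ordering $\sfE(\gamma^a)\wedge\sfE(\gamma^b)$ attached to $\gamma^a\oi\gamma^b$ depends only on the insertion, not on the bracket's argument order. Nothing further is needed.
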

   %The proof is straightforward.

\begin{theor}\label{ThJacobi}
The bracket $[\cdot,\cdot]$ on the space~$\Gra$ satisfies the %following 
graded Jacobi identity\textup{:}
\begin{equation}\label{EqGradJac}
\Jac(\gamma^1,\gamma^2,\gamma^3) \mathrel{{:}{=}} %\coloneqq
[[\gamma^1,\gamma^2],\gamma^3] 
+ (-)^{k_1 k_2+k_1 k_3} [[\gamma^2,\gamma^3],\gamma^1] 
+ (-)^{k_1 k_3+k_2 k_3} [[\gamma^3,\gamma^1],\gamma^2] = 0,
\end{equation}
for any graphs $\gamma^1$,\ $\gamma^2$,\ $\gamma^3$ on~$k_1$,\ $k_2$, and~$k_3$ vertices, respectively.

Moreover, the left\/-\/hand side of %this 
identity~\eqref{EqGradJac} %also 
equals zero viewed as an element of the vector space of formal sums of graphs 
\textup{(}i.e.\ the space \emph{without} relation~\eqref{EqRelation}\textup{).}
\end{theor}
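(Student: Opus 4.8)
The plan is to reduce to single graphs $\gamma^1,\gamma^2,\gamma^3$ by bilinearity of the bracket, to expand each of the three double brackets in \eqref{EqGradJac} by means of \eqref{EqBracket} into elementary insertion monomials, and then to exhibit an explicit sign-reversing pairing of the resulting graphs. Writing $|x|=\#\sfE(x)$, each double bracket $[[x,y],z]$ unfolds into the four monomials $(x\circ_i y)\circ_i z$, $(y\circ_i x)\circ_i z$, $z\circ_i(x\circ_i y)$, $z\circ_i(y\circ_i x)$ with the signs dictated by \eqref{EqBracket}; altogether $\Jac(\gamma^1,\gamma^2,\gamma^3)$ becomes a signed sum of twelve monomials. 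The key structural observation is that every graph occurring in such a monomial has one of only two nesting shapes (the two successive blow-ups organize the three graphs into a rooted tree, and on three nodes the only rooted trees are the chain and the fork): either a \emph{chain}, in which one graph sits in a vertex of a second which in turn sits in a vertex of the third, or a \emph{fork}, in which two of the graphs are blown up into two \emph{distinct} vertices of the third (the common host). I will match each term to a partner of opposite sign within these two families; since the matching acts on individual edge-ordered graphs, the cancellation already takes place in the free vector space of graphs with ordered edges, that is, before imposing \eqref{EqRelation}.

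For the chains, a fixed chain $\gamma^i\subset\gamma^j\subset\gamma^k$ arises from exactly two of the twelve monomials, namely the left-nested $(\gamma^i\circ_i\gamma^j)\circ_i\gamma^k$ and the right-nested $\gamma^i\circ_i(\gamma^j\circ_i\gamma^k)$. Both carry the edge ordering $\sfE(\gamma^i)\wedge\sfE(\gamma^j)\wedge\sfE(\gamma^k)$, and a direct comparison of the edge redistributions---exactly as in the matching of nested blown-up graphs in the proof of Proposition~\ref{Propddzero}---shows that they produce literally the same oriented graphs with the same multiplicities. Collecting the signs coming from \eqref{EqBracket} one finds that these two contributions enter with opposite coefficients, so all chain terms cancel identically, summand by summand.

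For the forks, a fixed host $\gamma^k$ with the other two graphs $\gamma^i,\gamma^j$ inserted into distinct vertices arises only from the two right-nested monomials $\gamma^i\circ_i(\gamma^j\circ_i\gamma^k)$ and $\gamma^j\circ_i(\gamma^i\circ_i\gamma^k)$. Both range, through the Leibniz rule for the incident edges, over the same family of graphs indexed by the ordered pairs of distinct vertices of $\gamma^k$, so the underlying oriented graphs again coincide as multisets; the only difference is the edge ordering, which is $\sfE(\gamma^i)\wedge\sfE(\gamma^j)\wedge\sfE(\gamma^k)$ in the first case and $\sfE(\gamma^j)\wedge\sfE(\gamma^i)\wedge\sfE(\gamma^k)$ in the second. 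These two orderings of one and the same graph differ by the transposition of the blocks $\sfE(\gamma^i)$ and $\sfE(\gamma^j)$, i.e.\ by the sign $(-)^{k_ik_j}$, and this is precisely the discrepancy between the two monomials' coefficients, so the fork terms cancel as well. I expect this fork matching to be the main obstacle: one must check carefully that the two iterated insertions sweep out exactly the same blown-up graphs with the same edge attachments (a bookkeeping of the Leibniz redistribution of $\bar{N}(v)$ that is carried out in genuinely different orders in the two monomials), and that the residual sign is exactly the block transposition $(-)^{k_ik_j}$ rather than something depending on the internal structure of the graphs.

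Since both families cancel term by term, the left-hand side of \eqref{EqGradJac} vanishes already in the free vector space of edge-ordered graphs, which is the asserted stronger identity: no term is discarded as a zero graph and no nontrivial automorphism identification from \eqref{EqRelation} is invoked. The graded Jacobi identity in $\Gra$ follows a fortiori, and, together with graded skew-symmetry, it yields in the standard way that the bracket of two $\Id$-cocycles is again a $\Id$-cocycle.
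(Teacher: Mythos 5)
Your proposal is correct and takes essentially the same route as the paper's proof: the paper's Part~1 carries out precisely your chain and fork cancellations for the terms hosted by $\gamma^3$ (its $a_1$ vs.\ $a_2$ and $b_1$ vs.\ $b_2$ pairings are your two chain types, and $c_1$ vs.\ $c_2$ is your fork pairing with the block-transposition sign $(-)^{k_1k_2}$), and then uses the $S_3$ graded skew-symmetry of $\Jac$ (its Part~2) to transfer the cancellation to the terms hosted by $\gamma^1$ and $\gamma^2$, whereas you run the same sign-reversing pairing uniformly over all three hosts. The only thing left implicit in your sketch is the explicit sign bookkeeping for the chain and fork coefficients, which the paper writes out.
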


\begin{proof}
The proof consists of two parts. 
We show first that all the graphs obtained in~\eqref{EqGradJac} by applying a repeated %second 
insertion of any graphs into~$\gamma^3$ cancel out. Secondly, we prove that the left\/-\/hand side of~\eqref{EqGradJac} is graded skew symmetric w.r.t.\ %under 
a permutations $\sigma\in S_3$ of the arguments~$\gamma^1$, $\gamma^2$,~$\gamma^3$.

\textbf{Part 1.} We have that
\begin{multline*}
[[\gamma^1,\gamma^2],\gamma^3] 
  + (-)^{k_1k_2+k_1k_3} [[\gamma^2,\gamma^3],\gamma^1] 
  + (-)^{k_1k_3+k_2k_3} [[\gamma^3,\gamma^1],\gamma^2] \\
= [[\gamma^1,\gamma^2],\gamma^3] 
   - (-)^{k_1k_2+k_1k_3+k_1(k_2+k_3)} [\gamma^1,[\gamma^2,\gamma^3]] 
   + (-)^{k_1k_3+k_2k_3+k_1k_3+k_2(k_1+k_3)} [\gamma^2,[\gamma^1,\gamma^3]] \\
= [[\gamma^1,\gamma^2],\gamma^3] 
   - [\gamma^1,[\gamma^2,\gamma^3]] 
   + (-)^{2(k_1k_3+k_2k_3)+k_1k_2} [\gamma^2,[\gamma^1,\gamma^3]].
\end{multline*}
If we restrict ourselves to the graphs obtained by applying twice an %second 
insertion of the %any 
graphs into~$\gamma^3$, we obtain
\begin{align*}
&\underbrace{(\gamma^1\circ_i\gamma^2)\circ_i\gamma^3}_{a_1} - \underbrace{(\gamma^2\circ_i\gamma^1)\circ_i\gamma^3}_{b_1} - \underbrace{\gamma^1\circ_i(\gamma^2\circ_i\gamma^3)}_{a_2}
- \underbrace{\left(^{\gamma^1\circ_i} _{\gamma^2\circ_i} \gamma^3\right) }_{c_1}\\
&{}\quad{}+ \underbrace{(-)^{2(k_1k_2+k_1k_3+k_2k_3)} (\gamma^2\circ_i\gamma^1)\circ_i\gamma^3}_{b_2} + \underbrace{(-)^{2(k_1k_2+k_1k_3+k_2k_3)} \left(^{\gamma^1\circ_i} _{\gamma^2\circ_i} \gamma^3\right) }_{c_2},
\end{align*}
where each graph is multiplied by the factor that corresponds to the change of edge ordering into $\sfE(\gamma^1)\wedge\sfE(\gamma^2)\wedge\sfE(\gamma^3)$. Here $\left(^{\gamma^1\circ_i} _{\gamma^2\circ_i} \gamma^3\right)$ equals the sum of those graphs present in $\gamma^1\circ_i(\gamma^2\circ_i\gamma^3)$ where $\gamma^1$ is inserted into a vertex of $\gamma^2\circ_i\gamma^3$ that initially belonged to $\gamma^3$ (and not to $\gamma^2$). 
Note that in part $c_2$ we %immediately 
replaced $\left(^{\gamma^2\circ_i} _{\gamma^1\circ_i} \gamma^3\right)$ by $(-)^{k_1k_2}\left(^{\gamma^1\circ_i} _{\gamma^2\circ_i} \gamma^3\right)$, where the graded sign appears due to the edge ordering. We see that $a_1$~cancels with~$a_2$, $b_1$~cancels with~$b_2$, and $c_1$~cancels with~$c_2$, so the formal sum of these graphs equals zero.

\textbf{Part 2.} It suffices to verify the skew symmetry of the left\/-\/hand side in the
graded identity~\eqref{EqGradJac} for any two generators of the group~$S_3$. 
We choose the two\/-\/cycles $(12)$ and~$(23)$ as its generators.
For the transposition~$\sigma=(12)$ we have that
\begin{multline*}
\Jac(\gamma^2,\gamma^1,\gamma^3) = [[\gamma^2,\gamma^1],\gamma^3] + (-)^{k_2k_1+k_2k_3} [[\gamma^1,\gamma^3],\gamma^2] + (-)^{k_2k_3+k_1k_3} [[\gamma^3,\gamma^2],\gamma^1]\\
= -(-)^{k_1k_2}[[\gamma^1,\gamma^2],\gamma^3] -(-)^{k_1k_2+k_1k_3+k_2k_3}[[\gamma^3,\gamma^1],\gamma^2] - (-)^{2k_2k_3+k_1k_3}[[\gamma^2,\gamma^3],\gamma^1]\\
=  - (-)^{k_1k_2}\left([[\gamma^1,\gamma^2],\gamma^3] 
   + (-)^{k_1k_2+k_1k_3} [[\gamma^2,\gamma^3],\gamma^1] 
   + (-)^{k_1k_3+k_2k_3} [[\gamma^3,\gamma^1],\gamma^2]\right)\\
= -(-)^{k_1k_2} \Jac(\gamma^1,\gamma^2,\gamma^3).
\end{multline*}
We see that $\epsilon(\sigma)=(-)^\sigma\cdot(-)^{\#\sfE(\gamma^1)\cdot\#\sfE(\gamma^2)}$
is the Koszul sign of the transposition~$\sigma=(12)$ of two homogeneous graded objects~$\gamma^1$ and~$\gamma^2$ (of gradings~$k_1$ and~$k_2$, respectively).
%%%
Likewise, for $\sigma=(23)$ we have that
\begin{multline*}
\Jac(\gamma^1,\gamma^3,\gamma^2) = [[\gamma^1,\gamma^3],\gamma^2] + (-)^{k_1k_3+k_1k_2} [[\gamma^3,\gamma^2],\gamma^1] + (-)^{k_1k_2+k_2k_3} [[\gamma^2,\gamma^1],\gamma^3]\\
= -(-)^{k_1k_3}[[\gamma^3,\gamma^1],\gamma^2] -(-)^{k_1k_2+k_1k_3+k_2k_3}[[\gamma^2,\gamma^3],\gamma^1] - (-)^{2k_1k_2+k_2k_3}[[\gamma^1,\gamma^2],\gamma^3]\\
=  - (-)^{k_2k_3}\left([[\gamma^1,\gamma^2],\gamma^3] 
   + (-)^{k_1k_2+k_1k_3} [[\gamma^2,\gamma^3],\gamma^1] 
   + (-)^{k_1k_3+k_2k_3} [[\gamma^3,\gamma^1],\gamma^2]\right)\\
=  -(-)^{k_2k_3} \Jac(\gamma^1,\gamma^2,\gamma^3).
\end{multline*}
In conclusion, $\Jac\bigl(\gamma^{\sigma(1)},\gamma^{\sigma(2)},\gamma^{\sigma(3)}\bigr)
=\epsilon(\sigma)\cdot\Jac(\gamma^1,\gamma^2,\gamma^3)$ with respect to the Koszul sign~$\epsilon(\sigma)$ of any permutation~$\sigma\in S_3$ of the graded objects.
This completes the proof.
\end{proof}
%%(We see that the Koszul sign is the graded sign that appears for both odd permutations.) %% I do not see the point of introducing more terminology. If we would add it, we should explain ealier that Gra is a Koszul algebra and we should give references. But I would not ellaborate on these signs.(NJR)

\begin{cor}[$\Id^2$ vanishes]\label{EasyddZeroInGra}
Let $\gamma$~be a single graph on~$k$ edges. Then~$\Id^2(\gamma)=\boldsymbol{0}\in\Gra$.
\end{cor}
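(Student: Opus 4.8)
The plan is to derive Corollary~\ref{EasyddZeroInGra} as a formal consequence of the differential graded Lie algebra structure already established, rather than by any new graph\/-\/combinatorial computation. The starting observation is that the differential is the adjoint action of the edge, $\Id({\cdot})=[\edge,{\cdot}]$, so $\Id^2(\gamma)=[\edge,[\edge,\gamma]]$. I would feed the triple $(\edge,\edge,\gamma)$ into the graded Jacobi identity of Theorem~\ref{ThJacobi}, exploiting that the edge has exactly one edge, so its grading in the sign conventions of~\eqref{EqGradJac} is odd. The guiding idea is the standard fact that for an odd element of a graded Lie algebra the iterated self\/-\/bracket is controlled by the Jacobi identity and forces $\Id^2=0$ in~$\Gra$.

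Concretely, I would instantiate $\gamma^1=\gamma^2=\edge$ and $\gamma^3=\gamma$ in~\eqref{EqGradJac}. With $k_1=k_2=1$ the signs collapse, and two of the three double brackets become $[[\edge,\edge],\gamma]$ and $[\edge,[\edge,\gamma]]=\Id^2(\gamma)$ up to the Koszul signs recorded in the theorem. First I would compute $[\edge,\edge]$ directly: this is the bracket of the edge with itself, which by Example~\ref{ExLeavesCancel} is $4\cdot{\bullet}\!{-}\!{\bullet}\!{-}\!{\bullet}=\boldsymbol{0}\in\Gra$, so the term $[[\edge,\edge],\gamma]=[\gamma^{\zstroke},\gamma]$ is the bracket with a zero graph and vanishes in~$\Gra$ by Theorem~\ref{TheoBracketZero}. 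The two remaining terms of the Jacobi identity both reduce, after the sign bookkeeping, to $\Id^2(\gamma)$, and since Theorem~\ref{ThJacobi} gives that the whole left\/-\/hand side is zero, I would solve for $\Id^2(\gamma)$ to conclude that a nonzero multiple of $\Id^2(\gamma)$ equals a sum of zero graphs, whence $\Id^2(\gamma)=\boldsymbol{0}\in\Gra$.

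The step I expect to require the most care is the sign and coefficient bookkeeping in the specialized Jacobi identity. Because two arguments coincide, the three nominally distinct double brackets partially collapse onto one another, and I must verify that the two copies of $\Id^2(\gamma)$ enter with the \emph{same} sign (so that they add to $2\,\Id^2(\gamma)$ rather than cancel to $0$, which would make the identity vacuous). This is exactly the point where the oddness of $\edge$ matters: one checks that the Koszul signs $(-)^{k_1k_2+k_1k_3}$ and $(-)^{k_1k_3+k_2k_3}$ of~\eqref{EqGradJac}, evaluated at $k_1=k_2=1$ and $k_3=k$, reduce to equal signs on the two bracketings $[\edge,[\edge,\gamma]]$, so that the Jacobi identity reads $2\,\Id^2(\gamma)+[\gamma^{\zstroke},\gamma]=\boldsymbol{0}\in\Gra$. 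Combining this with the vanishing of $[\gamma^{\zstroke},\gamma]$ yields $2\,\Id^2(\gamma)=\boldsymbol{0}$, and since $\Gra$ is a real vector space we may divide by~$2$ to obtain $\Id^2(\gamma)=\boldsymbol{0}\in\Gra$ as required. (This is the weaker, in\/-$\Gra$ statement advertised in the Introduction; the stronger identity $\Id^2(\gamma)=0$ in the space of graphs is the content of Proposition~\ref{Propddzero}.)
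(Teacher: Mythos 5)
Your proposal is correct and takes essentially the same route as the paper's own proof: specialize the graded Jacobi identity~\eqref{EqGradJac} to $(\edge,\edge,\gamma)$, check that the two nested brackets enter with equal signs so that they combine to $\pm 2\,\Id^2(\gamma)$ rather than cancelling, and dispose of $[[\edge,\edge],\gamma]$ via Example~\ref{ExLeavesCancel} and Theorem~\ref{TheoBracketZero}. (The only quibble is the overall sign: the correct bookkeeping gives $[[\edge,\edge],\gamma]-2\,\Id^2(\gamma)=0$, i.e.\ $\Id^2(\gamma)=\tfrac{1}{2}[[\edge,\edge],\gamma]$, not $+2\,\Id^2(\gamma)$ as you wrote --- but this is immaterial for the conclusion.)
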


\begin{proof}
Indeed, let $\gamma^1\mathrel{{:}{=}}\edge$ and $\gamma^2\mathrel{{:}{=}}\edge$ in the Jacobi identity~\eqref{EqGradJac}, whence
\begin{multline*}
[[\edge,\edge],\gamma] 
  + (-)^{1\cdot 1+1\cdot k} [[\edge,\gamma],\edge]
  + (-)^{1\cdot k+1\cdot k} [[\gamma,\edge],\edge] \\
{}= [[\edge,\edge],\gamma] 
  - (-)^{2(k+1)}[\edge,[\edge,\gamma]]
  + (-)^{2k+1}[\edge,[\edge,\gamma]] =0,
\end{multline*}
so that $\Id^2(\gamma)=\tfrac{1}{2}[[\edge,\edge],\gamma]$.
Because $[\edge,\edge]$ is a zero graph (see Example~\ref{ExLeavesCancel} on p.~\pageref{ExLeavesCancel}), the claim is proved.
\end{proof}

\begin{cor}
Let $\gamma^1$ and~$\gamma^2$ be --\,without loss of generality, homogeneous with respect to the number of edges\,-- $\Id$-\/cocycles. Then their Lie bracket~$[\gamma^1,\gamma^2]$ is a $\Id$-\/cocycle as well.
\end{cor}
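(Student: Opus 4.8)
The plan is to derive this corollary directly from the graded Jacobi identity (Theorem~\ref{ThJacobi}) together with the fact that $\Id=[\edge,{\cdot}]$, following the standard argument that in any differential graded Lie algebra the space of cocycles is closed under the bracket. Let $\gamma^1$ and $\gamma^2$ be $\Id$-cocycles, homogeneous of degrees $k_1$ and $k_2$ in the number of edges, so that $\Id(\gamma^1)=[\edge,\gamma^1]=\boldsymbol{0}\in\Gra$ and likewise $\Id(\gamma^2)=\boldsymbol{0}\in\Gra$. The goal is to show $\Id\bigl([\gamma^1,\gamma^2]\bigr)=\boldsymbol{0}\in\Gra$.

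First I would write out $\Id\bigl([\gamma^1,\gamma^2]\bigr)=[\edge,[\gamma^1,\gamma^2]]$ and apply the graded Jacobi identity~\eqref{EqGradJac} with the three arguments $\edge$, $\gamma^1$, $\gamma^2$ (recalling that $\edge$ has degree $1$). Expanding the left-hand side of~\eqref{EqGradJac} and solving for the term $[\edge,[\gamma^1,\gamma^2]]$, one expresses it --\,up to the Koszul signs $(-)^{k_1},(-)^{k_2}$ recorded in Theorem~\ref{ThJacobi}\,-- as a signed combination of the two iterated brackets $[[\edge,\gamma^1],\gamma^2]$ and $[[\edge,\gamma^2],\gamma^1]$. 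This is the graded Leibniz rule for $\Id$ over the bracket: schematically, $\Id[\gamma^1,\gamma^2]=[\Id\gamma^1,\gamma^2]\pm[\gamma^1,\Id\gamma^2]$ with the appropriate graded sign.

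Next I would substitute the cocycle hypotheses. Each surviving term contains an inner factor $[\edge,\gamma^1]=\Id(\gamma^1)$ or $[\edge,\gamma^2]=\Id(\gamma^2)$, both of which are the zero class in~$\Gra$. Here I would invoke Theorem~\ref{TheoBracketZero}: the outer bracket $[\,\cdot\,,\gamma^2]$ or $[\,\cdot\,,\gamma^1]$ of a sum of zero graphs against any graph vanishes in~$\Gra$. Hence $[[\edge,\gamma^1],\gamma^2]=\boldsymbol{0}$ and $[[\edge,\gamma^2],\gamma^1]=\boldsymbol{0}$ in~$\Gra$, so their signed combination vanishes and therefore $\Id\bigl([\gamma^1,\gamma^2]\bigr)=\boldsymbol{0}\in\Gra$, which is exactly the assertion that $[\gamma^1,\gamma^2]$ is a $\Id$-cocycle.

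The only delicate point is bookkeeping the graded signs, and in particular being careful that the Leibniz-type rearrangement is valid modulo the relation~\eqref{EqRelation} rather than only in the free vector space of graphs. This is legitimate precisely because Theorem~\ref{TheoBracketZero} guarantees that the bracket descends to~$\Gra$, so all manipulations may be carried out on the quotient; the representatives $[\edge,\gamma^i]$ being zero in~$\Gra$ is all that is needed, and I do not require the stronger identical vanishing. Since the sign factors only multiply terms that are already $\boldsymbol{0}\in\Gra$, their exact values are immaterial to the conclusion, which keeps the argument short.
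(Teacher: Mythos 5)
Your proposal is correct and follows essentially the same route as the paper: rearrange the graded Jacobi identity with first argument $\edge$ into a Leibniz rule $\Id[\gamma^1,\gamma^2]=\pm[\Id\gamma^1,\gamma^2]\pm[\Id\gamma^2,\gamma^1]$, then kill both terms via Theorem~\ref{TheoBracketZero} since each inner factor is the zero class in~$\Gra$. The paper's proof is just a terser version of the same argument, likewise deferring the sign bookkeeping on the grounds that every term is already $\boldsymbol{0}\in\Gra$.
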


\begin{proof}
We have that $\pm\Id\bigl([\gamma^1,\gamma^2]\bigr) =
\pm[\Id(\gamma^1),\gamma^2] \pm [\Id(\gamma^2),\gamma^1] = \boldsymbol{0}\in\Gra$ by Theorem~\ref{TheoBracketZero}. One proceeds now by linearity over the grading\/-\/homogeneous components in each cocycle~$\gamma^1,\gamma^2\in\ker\Id$.
\end{proof}

{\small
\subsubsection*{Acknowledgements}%\ack
The authors thank R.\,Buring and S.\,Taams for helpful discussions.
   %%% Janske Lazne? (January 2018)
The authors are grateful to the organizers of 32nd International colloquium on group theoretical methods in Physics (9--13~July 2018, CVUT~Prague, Czech Republic) for a warm atmosphere during the meeting and partial financial support of~NJR.
The research of AVK was partially supported by RUG project~106552.

}

\end{document}